\def \N{{\mathbb N}}
\def \R{{\mathbb R}}
\def \1{{\mathbb 1}}
\theoremstyle{plain}
\newtheorem{theorem}{Theorem}
\newtheorem{proposition}{Proposition}
\newtheorem{definition}{Definition}
\newtheorem{lemma}{Lemma} 
\newtheorem{corollary}{Corollary}
\theoremstyle{remark}
\newtheorem{remark}{Remark}
\newtheorem{example}{Example}
\begin{document}


\title[Norm attaining  operators and variational principle]{Norm attaining operators and variational principle.}

\author{Mohammed Bachir}

\address{Laboratoire SAMM 4543, Universit\'e Paris 1 Panth\'eon-Sorbonne\\
Centre P.M.F. 90 rue Tolbiac\\
75634 Paris cedex 13\\
France}

\email{Mohammed.Bachir@univ-paris1.fr}


\subjclass{46B20, 47L05, 47B48, 28A05}

\begin{abstract} We establish a  linear variational principle extending the Deville-Godefroy-Zizler's one. We use this variational principle to  prove that if $X$ is a Banach space having property $(\alpha)$ of Schachermayer and $Y$ is any banach space, then the set of all norm strongly attaining linear operators from $X$ into $Y$ is a complement of a $\sigma$-porous set. Moreover, the results of the paper applies also to an abstract class of (linear and nonlinear) operator spaces.
\end{abstract}

\maketitle


\newcommand\sfrac[2]{{#1/#2}}

\newcommand\cont{\operatorname{cont}}
\newcommand\diff{\operatorname{diff}}

{\bf Keywords and phrases:} Variational principle, Norm  attaining operators, Uniform separation property, $\sigma$-Porosity.


\section{\bf Introduction}
This paper is devoted to establish a new linear variational principle in the sprit of Stegall's one (see \cite{St} or \cite[Theorem 5.15]{Ph}),  which applies to a certain "small class" of subsets of Banach spaces. However, we do not need in our statment to assume that the Banach spaces have the Radon-Nikod\'ym property. The interest of this  result  is that, on the one hand, it extends the non-linear variational principle  of Deville-Godefroy-Zizler and Deville-revalski (see respectively \cite{DGZ} and \cite{DR}) and, on the other hand, it makes it possible to show that the set of norm attaining operators (under the hypothesis $(\alpha)$) is not only a dense subset of the space of all bounded linear operators but it is larger in the sense that is  a  complement of a $\sigma$-porous subset.  Moreover, "norm attaining operators" is extended to "strongly norm attaining operators".
\vskip5mm
Let $X$ and $Y$ be  real Banach spaces. The space $B(X,Y)$ (resp. the spaces $K(X,Y), F(X,Y)$) denotes the space of all bounded linear operators (resp. the spaces of compact operators, finite-rank operators). An operator $T \in B(X,Y)$  is said to be norm attaining  (resp. norm strongly attaining) if there is an $x_0 \in S_X$ (the sphere of $X$) such that $\|T\|=\|T(x_0)\|$ (resp. $\|T(x_n)\|\to\|T\|=\|T(x_0)\|$ implies that $\|x_n-x_0\|\to 0$). We write $NAB(X, Y)$ to denote the set of norm-attaining operators in $B(X,Y)$. The question whether $NAB(X, Y)$ is norm dense in $B(X,Y)$, starts in 1961 with the works of  Bishop and Phelps \cite{Bi-Ph1, Bi-Ph2}, where they proved that  if $Y$  is one-dimensional then $NAB(X, Y)$ is norm dense in $X^*= B(X, Y)$  for all spaces $X$.  In 1963,  Lindenstrauss \cite{Lin}, showed that the Bishop-Phelps theorem is not longer true for linear operators and gave some partial positive results. He introduced  property $(\beta)$ and proved that if $Y$ has the property $(\beta)$, then for every Banach space  $X$, $NAB(X, Y)$ is dense in $B(X,Y)$. Partington proved in \cite{Pa} that every Banach space $Y$ can be renormed to have the property $(\beta)$.  Schachermayer \cite{Sch} introduced property $(\alpha)$ as a sufficient condition on a Banach space $X$ such that $NAB(X, Y)$ is dense in $B(X,Y)$ for every $Y$  and he showed that  every weakly compactly generated Banach space can be renormed with property $(\alpha)$. Several authors have contributed in this domain, extending these results in different ways. There exists also a "quantitative version" of the Bishop-Phelps-Bollobás \cite{Bo} theorem given by Acosta, Aron, García and Maestre in \cite{AAGM}. Several authors have proven similar results, replacing $B(X,Y)$ by other type of operator spaces.   For a complete story of contributions in this domain, we refer to \cite{Ac} and the references therein.

The contribution on the subject of norm attaining operators in this paper, consist on replacing the density norm-attaining operators by the complement of $\sigma$-porosity and by giving an unified and abstract class of (linear and nonlinear)  operator spaces  satisfying the "norm attaining operators property" (see Theorem \ref{NA}). In particular, we obtain the following results: 

$(1)$ If $X$ has property $(\alpha)$ (see Example B in Section \ref{S2} for the definition), then for every Banach space $Y$ and every closed subspace $R(X,Y)$ of $B(X,Y)$ contaning $F(X,Y)$, we have that $NAR(X, Y)$ (the subset  of norm-attaining operators in $R(X,Y)$) is a complement of a $\sigma$-porous subset of $R(X,Y)$. In fact, we prove the result for  norm  strongly attaining operators.

$(2)$ The results of the paper applies also to nonlinear operator spaces as the space of all bounded  continuous (resp. uniformly continuous) functions from a complete metric space into a Banach space, extending some real-valued results of  Coban, Kenderov and Revalski in \cite{CKR} (see also \cite{DR}),  to the vector-valued framework.  Forr another direction of  Lipschitz norm  attaining  functions, we refer to \cite{KMS} and \cite{G}.

\vskip5mm
This paper is organized as follows. In Section \ref{S1}, we introduce a crucial property in our results that we called  {\it "uniform separation property"} (in short, $\mathcal{USP}$). We then give some examples of sets satisfying this property. In Section \ref{S2}, we prove our version of  linear variational principle  (Theorem ~\ref{princ})  and its localised version (Theorem ~\ref{corprinc}). We also gives an extension of  Deville-Godefroy-Zizler variational principle as immediat consequence. In Section \ref{S3}, we will apply this new variational principle to obtain, the $\sigma$-porosity of the set of norm nonattaining  operators in Theorem \ref{NA} and its corollaries.

\section{The uniform separation property.}\label{S1}
In this section, we introduce the notion of {\it uniform separation property} and gives some examples. The variational principle given in this paper, applies for general pseudometric spaces for generalised lower semicontinuous functions. We first recall the following definition.
\begin{definition} Let $C$ be a nonempty set and $\gamma : C\times C\to \R^+$. We say that $\gamma$ is a pseudometric if 

$(1)$ $\gamma(x,x)= 0$, for all $x \in C$.

$(2)$ $\gamma(x,y)=\gamma(y,x)$, for all $x \in C$.

$(3)$ $\gamma(x,y)\leq \gamma(x,z)+ \gamma(z,y)$, for all $x,y,z \in C$.
\end{definition}
Unlike a metric space,  one may have $\gamma ( x , y ) = 0$  for distinct values $ x \neq y $ . A pseudometric induces an equivalence relation, that converts the pseudometric space into a metric space. This is done by defining $x\sim y$ if  $\gamma ( x , y ) = 0$.
Let $\Gamma_\gamma:  C\to  C/\sim $ the canonical surjection mapping and let 
$$d_\gamma(\Gamma_\gamma(x),\Gamma_\gamma(y)):=\gamma(x,y).$$
Then, $(C/\sim,d_\gamma)$ is a well defined metric space. We say that $(C,\gamma)$ is a complete pseudometric space, if $(C/\sim,d_\gamma)$ is a complete metric space.

\begin{definition} \label{USP}
Let $X$ be a Banach space, $C$ be a subset of the dual $X^*$ and $(C,\gamma)$ be a pseudometric space.  We say that  $(C,\gamma)$ has the weak$^*$-uniform separation property (in short $w^*\mathcal{USP}$) in $X^*$ if there exists $a>0$ such that for every $\varepsilon\in ]0,a]$, there exists $\varpi_C(\varepsilon)>0$ such that for every $p\in C$, there exists $x_{p,\varepsilon}\in B_X$ (the closed unit ball of $X$) such that
\begin{eqnarray*}
\langle p, x_{p,\varepsilon} \rangle - \varpi_C(\varepsilon) \geq \langle q, x_{p,\varepsilon} \rangle, \textnormal{ for all } q\in C \textnormal{ such that } \gamma(q,p) \geq \varepsilon.
\end{eqnarray*}
 If $C$ is a subset of a  Banach space $X$, we say that $(C,\gamma)$ has the $\mathcal{USP}$ in $X$ if $(C,\gamma)$ has the $w^*\mathcal{USP}$ in $X^{**}$, when $C$ is considered as a subset of the bidual $X^{**}$. 
\end{definition}

The function $\varpi_C$ will be called, the {\it modulus of uniform separation} of $(C,\gamma)$. If $x\in X$, by  $\hat{x}$ we denote the evaluation  map at $x$ given by $\hat{x}: x^*\mapsto \langle x^*, x \rangle$, for all $x^*\in X^*$.  

\begin{remark} $1)$  If $A\subset C$ and $(C,\gamma)$ has the $w^*\mathcal{USP}$ (resp. the $\mathcal{USP}$), then clearly $(A, \gamma)$ also has the $w^*\mathcal{USP}$ (resp. the $\mathcal{USP}$). 

$2)$ Two interesting cases coresponds to framework where $\gamma$ is the norm of $X^*$ or the distance associated to the weak-star topoloy if the space $X$ is separable, but working with the general pseudometric has its applications as we will see in the context of norm attaining linear operators.

\end{remark}

The following proposition is easy to establish, his proof is left to the reader.
\begin{proposition} \label{CUSP} Let $X$ be a Banach space and $C$ be a subset of $X^*$ (resp. subset of $X$). Suppose that $(\overline{C},\gamma)$ is a pseudometric space (where $\overline{C}$ denotes the norm closure of $C$) and  the identity map $i: (C,\|\cdot\|)\to(C,\gamma)$ is continuous. Then, $(C,\gamma)$ has the $w^*\mathcal{USP}$ (resp. has the $\mathcal{USP}$) if and only if $(\overline{C},\gamma)$ has the $w^*\mathcal{USP}$ (resp. has the $\mathcal{USP}$).
\end{proposition}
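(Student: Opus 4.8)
The plan is to prove the two implications separately, the reverse one being essentially free. If $(\overline{C},\gamma)$ has the $w^*\mathcal{USP}$ then, since $C\subset\overline{C}$, part $1)$ of the Remark above gives immediately that $(C,\gamma)$ has the $w^*\mathcal{USP}$, with the same constant and the same modulus. Also, the case of a subset $C$ of $X$ reduces to the case of a subset of a dual space: $X$ sits isometrically in $X^{**}$ as a norm-closed subspace, so the norm closure of $C$ is the same whether computed in $X$ or in $X^{**}$, whereas ``$\mathcal{USP}$ in $X$'' means by definition ``$w^*\mathcal{USP}$ in $X^{**}$''. Hence everything reduces to the following claim: if $C\subset X^*$ and $(C,\gamma)$ has the $w^*\mathcal{USP}$ with constant $a>0$ and modulus $\varpi_C$, then $(\overline{C},\gamma)$ has the $w^*\mathcal{USP}$, with the same constant $a$ and with modulus $\varpi_{\overline{C}}(\varepsilon):=\varpi_C(\varepsilon/2)/2$.

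To establish the claim I would fix $p\in\overline{C}$ and $\varepsilon\in\,]0,a]$ and set $\delta:=\varpi_C(\varepsilon/2)>0$. First, using the norm-density of $C$ in $\overline{C}$ together with the continuity of the identity map, choose $p'\in C$ with $\|p-p'\|<\delta/4$ and $\gamma(p,p')<\varepsilon/4$, and take $y:=x_{p',\varepsilon/2}\in B_X$, the element furnished by the $w^*\mathcal{USP}$ of $C$ for the point $p'$ at the parameter $\varepsilon/2$; this $y$ will serve as $x_{p,\varepsilon}$. Next, given any $q\in\overline{C}$ with $\gamma(q,p)\ge\varepsilon$, choose $q'\in C$ with $\|q-q'\|<\delta/4$ and $\gamma(q,q')<\varepsilon/4$. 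The triangle inequality for $\gamma$ on $\overline{C}$ gives $\gamma(q',p')\ge\gamma(q,p)-\gamma(q,q')-\gamma(p,p')>\varepsilon/2$, so by the defining property of $y$ we get $\langle p',y\rangle-\delta\ge\langle q',y\rangle$. Since $\|y\|\le1$ we also have $\langle p,y\rangle\ge\langle p',y\rangle-\|p-p'\|>\langle p',y\rangle-\delta/4$ and $\langle q,y\rangle\le\langle q',y\rangle+\|q-q'\|<\langle q',y\rangle+\delta/4$; combining the three inequalities yields $\langle p,y\rangle-\langle q,y\rangle>\delta/2$, that is, $\langle p,y\rangle-\delta/2\ge\langle q,y\rangle$, which is exactly the required separation with $\varpi_{\overline{C}}(\varepsilon)=\delta/2$.

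The one step I expect to require care — and the only place where the hypothesis really enters — is the simultaneous choice of approximants $p',q'\in C$ that are not merely norm-close but also $\gamma$-close to $p,q\in\overline{C}$; this is exactly the continuity of the identity map $(\overline{C},\|\cdot\|)\to(\overline{C},\gamma)$, which the continuity of $i$ on the norm-dense subset $C$ is there to provide (and which is transparent in the settings of the paper, where $\gamma$ is norm-continuous on $\overline{C}$, e.g.\ the norm of $X^*$, a weak$^*$-metric, or the pseudometrics arising for norm attaining operators). Granting that, the rest is just the bookkeeping of the constants — splitting $\varepsilon$ into $\varepsilon/2$ and then $\varepsilon/4+\varepsilon/4$, and $\delta$ into $\delta/4+\delta/4$ — which is why the proposition is recorded without proof; and the same argument, run with the dual pair $(X^*,X^{**})$, covers the ``$\mathcal{USP}$ in $X$'' case.
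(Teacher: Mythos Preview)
The paper does not prove this proposition; it records it as ``easy to establish'' and leaves the proof to the reader. So there is no argument to compare yours against. Your argument is the natural one and is correct, modulo the point you yourself flag.

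That caveat is legitimate and not merely cosmetic: you need continuity of $i:(\overline{C},\|\cdot\|)\to(\overline{C},\gamma)$, whereas the stated hypothesis only gives continuity on $C$, and continuity on a norm-dense subset does \emph{not} in general extend to the closure. For a toy illustration, take $\overline{C}=[0,1]$, $C=(0,1]$, set $f(0)=0$ and $f(x)=\sin(1/x)$ for $x>0$, and define $\gamma(x,y)=|f(x)-f(y)|$; this is a pseudometric on $\overline{C}$, the identity $(C,|\cdot|)\to(C,\gamma)$ is continuous, but $i$ fails to be continuous at $0$. So the proposition as written carries a small imprecision in its hypotheses. The intended hypothesis --- and the one satisfied in every instance the paper actually uses (the norm of $X^*$, a weak$^*$-metric, the pseudometric $\gamma_{\mathcal P}$ of Section~\ref{S3}) --- is norm-to-$\gamma$ continuity of the identity on all of $\overline{C}$. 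Under that reading, your $\varepsilon/2$--$\varepsilon/4$ and $\delta/4$ bookkeeping goes through exactly as you wrote it, and the proof is complete.
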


\subsubsection{\bf Examples of subsets having the $\mathcal{USP}$.} \label{examples}  We give some examples of sets satisfying the $\mathcal{USP}$ or $w^*\mathcal{USP}$.
\paragraph{\bf A.  Uniform convex spaces.} Recall that a Banach space $(L,\|.\|)$ is uniformly convex if for each $\varepsilon\in ]0,2]$,
\begin{eqnarray*}
\delta(\varepsilon):=\inf\lbrace 1-\|\frac{x+y}{2}\|: x,y\in S_L; \|x-y\|\geq \varepsilon\rbrace >0.
\end{eqnarray*}

\begin{proposition} \label{prop1} Let $L$ (resp. $L^*$) be a uniformly convex Banach space. Then the sphere $(S_L,\|.\|)$ (resp. the sphere $(S_{L^*},\|\cdot\|)$) has the $\mathcal{USP}$ in $L$ (resp. the $w^*\mathcal{USP}$ in $L^*$).
\end{proposition}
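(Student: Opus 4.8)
The plan is to read off both statements directly from the definition of the modulus of convexity. Write $\delta$ for the modulus of convexity of $L$ (resp.\ of $L^*$); uniform convexity means precisely that $\delta(\varepsilon)>0$ for every $\varepsilon\in\,]0,2]$. Accordingly I will take the constant in Definition~\ref{USP} to be $a=2$ and the modulus of uniform separation to be $\varpi_C(\varepsilon):=2\delta(\varepsilon)$ for $\varepsilon\in\,]0,2]$. The single idea driving the argument is that a unit functional (or unit vector) that "supports" $p$ at value $1$ separates $p$ from everything at distance $\geq\varepsilon$ by a gap controlled by $\delta(\varepsilon)$.

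First consider $L^*$ uniformly convex with $C=S_{L^*}\subset L^*=X^*$, so $X=L$. For each $p\in S_{L^*}$ I want a single $x_p\in B_L$ with $\langle p,x_p\rangle=1$. Such a point exists because uniform convexity of $L^*$ forces $L^*$, hence $L$, to be reflexive (Milman--Pettis), so $p$ attains its norm on $B_L$. I set $x_{p,\varepsilon}:=x_p$ (independent of $\varepsilon$). Then for $q\in S_{L^*}$ with $\gamma(q,p)=\|q-p\|\geq\varepsilon$, the definition of $\delta$ gives $\|\tfrac{p+q}{2}\|\leq 1-\delta(\varepsilon)$, and expanding $\langle q,x_p\rangle=2\langle\tfrac{p+q}{2},x_p\rangle-\langle p,x_p\rangle\leq 2(1-\delta(\varepsilon))-1=\langle p,x_p\rangle-\varpi_C(\varepsilon)$, which is exactly the inequality required by Definition~\ref{USP}. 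Hence $(S_{L^*},\|\cdot\|)$ has the $w^*\mathcal{USP}$ in $L^*$.

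For $L$ uniformly convex with $C=S_L$, recall that by Definition~\ref{USP} having the $\mathcal{USP}$ in $L$ means having the $w^*\mathcal{USP}$ in $L^{**}$, so I view $S_L\subset L^{**}$ and now $X=L^*$. For $p\in S_L$, Hahn--Banach supplies $\phi_p\in S_{L^*}\subset B_{L^*}$ with $\langle p,\phi_p\rangle=1$; I set $x_{p,\varepsilon}:=\phi_p$ and run the identical computation, using $\|\tfrac{p+q}{2}\|\leq 1-\delta(\varepsilon)$ whenever $q\in S_L$ and $\|q-p\|\geq\varepsilon$, to obtain $\langle q,\phi_p\rangle\leq\langle p,\phi_p\rangle-2\delta(\varepsilon)$. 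This gives the $\mathcal{USP}$ in $L$.

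I do not expect any genuine obstacle: this is essentially a one-line consequence of uniform convexity once the correct supporting vector is chosen, and in fact the modulus of uniform separation comes out proportional to the modulus of convexity. The only mild points requiring attention are (i) that $\varpi_C$ must be defined on a bona fide interval $]0,a]$, which forces $a=2$ since $\delta$ is only defined up to $2$ (for $\varepsilon>2$ the set $\{q\in C:\gamma(q,p)\geq\varepsilon\}$ is empty and the condition is vacuous anyway); and (ii) in the first case, extracting an \emph{exact} supporting point $x_p$, for which I invoked reflexivity. If one prefers to avoid Milman--Pettis, one may instead pick $x_p\in B_L$ with $\langle p,x_p\rangle\geq 1-\delta(\varepsilon)/2$ (now depending on $\varepsilon$), and the same estimate then yields $\langle p,x_p\rangle-\langle q,x_p\rangle\geq\delta(\varepsilon)$, so one takes $\varpi_C(\varepsilon)=\delta(\varepsilon)$.
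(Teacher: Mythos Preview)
Your proof is correct and follows essentially the same argument as the paper: both derive the separation inequality directly from the modulus-of-convexity estimate $\|\tfrac{p+q}{2}\|\leq 1-\delta(\varepsilon)$ applied against a (near-)supporting functional or vector. The paper uses precisely the approximate-support variant you describe at the end (choosing $p_{x,\varepsilon}\in S_{L^*}$ with $\langle p_{x,\varepsilon},x\rangle>1-\tfrac{\delta(\varepsilon)}{2}$, yielding $\varpi_{S_L}(\varepsilon)=\delta(\varepsilon)$), rather than invoking Milman--Pettis for an exact supporting point.
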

\begin{proof} Let $\varepsilon\in ]0, 2]$. For each $x, y\in S_L$ such that $\|x-y\|\geq \varepsilon$ we have 
\begin{eqnarray*}
\|\frac{x+y}{2}\| \leq 1- \delta(\varepsilon).
\end{eqnarray*}
Thus, for all $p\in S_{L^*}$ we have 
\begin{eqnarray*}
\langle p, \frac{x+y}{2}\rangle\leq \|\frac{x+y}{2}\| \leq 1- \delta(\varepsilon).
\end{eqnarray*}
Now, let us fix an arbitrary $x\in S_L$ and choose $p_{x,\varepsilon} \in S_{L^*}$ such that $\langle p_{x,\varepsilon}, x\rangle>1-\frac{\delta(\varepsilon)}{2}$. Using the above inequality, we get that
$\langle p_{x,\varepsilon}, y \rangle\leq 2-2\delta(\varepsilon)-\langle p_{x,\varepsilon}, x\rangle\leq \langle p_{x,\varepsilon}, x\rangle-\delta(\varepsilon)$ for all $y\in S_L$ such that $\|x-y\|\geq \varepsilon$. 
Hence, $(S_L,\|.\|)$ has the $\mathcal{USP}$ with modulus of uniform separation $\varpi_{S_L}(\varepsilon)=\delta(\varepsilon)$ for all $\varepsilon\in ]0,2]$ (the same proof work for $(S_{L^*},\|.\|)$) . 
\end{proof}
\paragraph{\bf B. Property $(\alpha)$.}  Recall the property $(\alpha)$ introduced by Schachermayer  (see \cite{Sch}).  A Banach space $X$ has property $(\alpha)$ if there exist $\lbrace x_\lambda: \lambda \in \Lambda \rbrace$, $\lbrace x^*_\lambda: \lambda \in \Lambda \rbrace$, subsets of $X$ and $X^*$  respectively, such that 

$1)$ $\|x_\lambda\|=\|x^*_\lambda\|=\langle x^*_\lambda, x_\lambda \rangle =1$ for all $\lambda \in \Lambda$.

$2)$ There exists a constant $\rho$ with $0<\rho < 1$ such that, for $\lambda, \mu \in \Lambda$ with $\lambda \neq \mu$, we have 
that $|\langle x^*_\lambda, x_\mu \rangle|\leq \rho.$

$3)$ The absolute convex hull of the set $\lbrace x_\lambda: \lambda \in \Lambda \rbrace$ is dense in the unit ball of $X$.

\vskip5mm
Clearly, conditions $1)$ and $2)$ implies that $(\lbrace x_\lambda: \lambda \in \Lambda \rbrace, \|\cdot\|_X)$ has the $\mathcal{USP}$ in $X$ and also $(\overline{\lbrace x_\lambda: \lambda \in \Lambda \rbrace}, \|\cdot\|_X)$ has the $\mathcal{USP}$ in $X$ by Proposition \ref{CUSP}. 
\vskip5mm

\paragraph{\bf C. The Dirac measures.} Let $(L,d)$ be a metric space and $(X,\|.\|_X)$ be a Banach space included in $C_b(L)$ (the space of all real-valued bounded continuous functions equipped with the sup-norm). Suppose that $X$ separates the points of $L$ and satisfies $\|.\|_X\geq \alpha\|.\|_{\infty}$ on $X$, for some $\alpha>0$. Recall that the Dirac measure associated to the point $x\in L$ is the evaluation  linear continuous functional $\delta_x: h\mapsto h(x)$, $h\in X$. Since $\|.\|_X\geq \alpha\|.\|_{\infty}$, it follows that $\|\delta_x\|\leq \frac{1}{\alpha}$ for all $x\in L$. Thus, the subset $\delta(L):=\lbrace \delta_x : x\in L\rbrace$ is norm bounded in $X^*$. We equipp the set $\delta(L)$ with the following complete metric :
$$\tilde{d}(\delta_x, \delta_y):=d(x,y).$$  
Notice that the map $\tilde{d}$ is well defined since $X$ separates the points of $L$. Let $h$ be a real-valued function on $L$ and $A$ be a subset of $L$. By $\textnormal{supp}(h):=\overline{\lbrace x\in L: h(x)\neq 0\rbrace}$, we denote the support of $h$ and by $\textnormal{diam}(A)$, we denote the diameter of $A$. We consider the following hypothesis: 

\noindent $ ({\bf H})$ : for every $\varepsilon>0$ there exists $\varpi_X(\varepsilon)>0$ such that, for every $x\in L$, there exists a function $b_{x,\varepsilon}\in B_X$ such that,
$$  b_{x,\varepsilon}(x)-\varpi_X(\varepsilon)\geq \sup_{y\in L:d(y,x)\geq \varepsilon} b_{x,\varepsilon}(y).$$

The following hypothesis is used by Deville-Revalski in \cite{DR}: 

\noindent ${\bf (DR)}$ for every natural number $n$, there exists a positive constant $M_n$ such that for any point $x \in L$ there exists a function $h_{x,n} : L \longrightarrow [0; 1]$, such that $h_{x,n} \in X$ ,
$\|h_{x,n}\|\leq M_n$, $h_{x,n}(x) = 1$ and $diam(supp (h_{x,n})) < \frac 1 n.$ 
\vskip5mm
Then, we have that: ${\bf (DR)} \Longrightarrow ({\bf H}) \Longleftrightarrow (\delta(L), \tilde{d}) \textnormal{ has the } w^*\mathcal{USP} \textnormal{ in } X^*.$
\vskip5mm
The fact that ${\bf (DR)} \Longrightarrow ({\bf H})$ is given by taking  $b_{x,\varepsilon}:=\frac{h_{x,[\frac 1 \varepsilon]+1}}{M_{[\frac 1 \varepsilon]+1}}\in B_{X}$ and $\varpi_X(\varepsilon)=\frac{1}{M_{[\frac 1 \varepsilon]+1}}$, for all $\varepsilon>0$, where $[\frac 1 \varepsilon]$ denotes the integer part of $\frac 1 \varepsilon$. The part $({\bf H}) \Longleftrightarrow (\delta(L), \tilde{d})$  has the  $w^*\mathcal{USP}$  in $X^*$, follows from the definitions. However, $({\bf H}) \not\Longrightarrow {\bf (DR)}$ in general. Indeed,  for a bounded complete metric space $(L,d)$, consider $(X\|.\|_X)=(\textnormal{Lip}_0(L),\|.\|)$, the space of all Lipschitz continuous functions that vanish at some point $x_0\in L$ equipped with its natural norm
$$\|g\|:=\sup_{x, y \in L:x\neq y} \frac{|g(x)-g(y)|}{d(x,y)}; \hspace{2mm} \forall g\in X.$$
Then, hypothesis $({\bf H})$  is  trivially satisfied with $\varpi_L(\varepsilon)=\varepsilon$ for all $\varepsilon>0$ and $b_{x,\varepsilon}(y):=d(x,x_0)-d(x,y)$ for all $x,y\in L$. However, hypothesis ${\bf (DR)}$  is never satified for $X=\textnormal{Lip}_0(L)$ since $f(x_0)=0$ for all $f\in \textnormal{Lip}_0(L)$. Thus, the condition that $(\delta(L), \tilde{d})$  has the  $w^*\mathcal{USP}$  in $X^*$ ($\Longleftrightarrow ({\bf H}) $) , is more general than the hypothesis ${\bf (DR)}$ used by Deville-Revalski in \cite{DR}.
\vskip5mm
The extension of the Deville-Revalski result in \cite{DR}, will be given by applying our main result (Theorem ~\ref{princ}) to the metric space $(\delta(L), \tilde{d})$ who has  the  $w^*\mathcal{USP}$.

\section{Linear variational principle.}\label{S2}

This section is devoted to establish a linear variational principle for $w^*\mathcal{USP}$ subsets of Banach spaces.  We recall that a function $f$ has a strong minimum on a metric space $(C,d)$ at some point $p\in C$, if $f$ attains its minimum at $p$ and for any sequence $(p_n)\subset C$ such that $f(p_n)\to f(p)=\inf_C f$, we have that $d(p_n,p)\to 0$. A function $f$ has a strong maximum if $-f$ has a strong minimum. To obtain our result in the more general case of pseudometric spaces, we need to introduce the following definition.
\begin{definition} \label{Gdir} Let $(C,\gamma)$  be a pseudometric space. Let $f: C \to \R \cup \lbrace +\infty \rbrace$ be a proper bounded from below function. We say that $f$ attains $\gamma$-strongly-directionally its infinimum over $C$ at a direction $u\in C$  if and only if for every sequence $(q_n)\subset C$ we have 
$$\lim_{n\rightarrow +\infty} f(q_n)=\inf_C f \Longrightarrow \lim_{n\rightarrow +\infty} \gamma(q_n,u)=0.$$
A function $g$ attains $\gamma$-strongly-directionally its supremum over $C$ iff $-g$ attains $\gamma$-strongly-directionally its infinimum over $C$.
\end{definition}
In the general case, it may be that in the previous definition we have that $\inf_C f \neq f(u)$. However, the direction $u$ is necessarilly unique up to the relation $\sim$, that is, every other direction $v\in C$ satisfying the above property is such that $\gamma(v,u)=0$ and the converse is also true. Note that if, moreover,  we assume that$f$ is lower semicontinuous with respect to the pseudometric $\gamma$ (that is, for every sequence $(q_n)\subset C$, $\liminf_{n\rightarrow +\infty} f(q_n)\geq f(u)$, whenever $\lim_{n\rightarrow +\infty} \gamma(q_n,u)=0$), then the infimum of $f$ is atained at $u$. In the particular case where $\gamma$  is a metric and $f$ is lower semicontinuous for $\gamma$, the $\gamma$-strongly-directionally infinimum coincides with the classical notion of {\it strong minimum} mentioned above.
\vskip5mm
We recall the notion of $\sigma$-porosity. In the following definition,  $\mathring{B}_X(x ; r)$ stands for the open ball in $X$ centered at $x$ and with radius $r > 0$.
\begin{definition}\label{prous}  Let $(X ; d)$ be a metric space and $A$ be a subset of $X$. The set $A$ is
said to be porous in $X$ if there exist $\lambda_0 \in (0; 1]$ and $r_0 > 0$ such that for any $x \in X$
and $r \in (0; r_0]$ there exists $y \in X$ such that $\mathring{B}_X(y; \lambda_0r) \subset \mathring{B}_X(x; r) \cap (X \setminus A)$. The set
$A$ is called $\sigma$-porous in $X$ if it can be represented as a countable union of porous sets in $X$.
\end{definition}
 Every $\sigma$-porous set is of first Baire category. Moreover, in $\R^n$, every $\sigma$-porous set is of Lebesque measure zero. However,  there does exist a non-$\sigma$-porous subset of $\R^n$ which is of the first category and of Lebesgue measure zero. For more informations about $\sigma$-porosity, we refer to \cite{Za}. 
\vskip5mm

We give now, the main results of this section. We will see in Corollary \ref{DRB}  (see below),  how to recover and extend easily the Deville-Godefroy-Zizler and Deville-Revalski variational principles, from the following theorem (a vector-valued variational principle of type  Deville-Godefroy-Zizler is also given in Theorem \ref{NA}). Note that changing the "infinimum" by "supremum" and $f$ by $-f$, we obtain the "supremum version" of the following theorem which will be used in the context of norm attaining operators.

\begin{theorem} \label{princ} Let $X$ be a Banach space and $C$ be a norm bounded subset of the dual $X^*$. Suppose that $(C,\gamma)$ is a complete pseudometric space having the $w^*\mathcal{USP}$ in $X^*$. Let $f: C \to \R\cup \lbrace +\infty \rbrace$ be any proper bounded from below function. Then, there exists a $\sigma$-porous subset $F$ of $X$ such that for every $x\in X\setminus F$, $f+\hat {x}$ attains  $\gamma$-strongly-directionally its infinimum over $C$ at some direction $u\in C$.
\end{theorem}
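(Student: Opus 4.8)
The plan is to mimic the classical Stegall / DGZ scheme: build a countable family of "bad" sets $F_n$, each of which fails to produce an approximate-minimizing sequence that is $\gamma$-Cauchy, show each $F_n$ is porous in $X$, and then argue that off $\bigcup_n F_n$ the function $f+\hat x$ does admit a $\gamma$-strongly-directional infimum. Throughout one works in $C/\!\sim$ with the metric $d_\gamma$, which by hypothesis is a complete metric space, and one pushes $f$ down to a function $\tilde f$ on $C/\!\sim$ (well defined only modulo the equivalence relation on the level of approximate minimizers, which is exactly the subtlety the $\gamma$-strongly-directional language is designed to absorb). For $x\in X$ set $v(x):=\inf_{q\in C}(f(q)+\langle q,x\rangle)$; since $C$ is norm bounded, $v$ is real-valued, and a quick computation shows $v$ is concave and Lipschitz on $X$ (with constant $\sup_{q\in C}\|q\|$), hence continuous.

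The core definition: fix the constant $a>0$ and the modulus $\varpi_C$ from the $w^*\mathcal{USP}$. For $k\ge 1$ with $1/k\le a$, let $F_k$ be the set of $x\in X$ for which the "diameter of the set of near-minimizers is $\ge 1/k$ at every level", i.e. for every $\eta>0$ there exist $q_1,q_2\in C$ with $f(q_i)+\langle q_i,x\rangle< v(x)+\eta$ and $\gamma(q_1,q_2)\ge 1/k$. Then $X\setminus\bigcup_k F_k$ is precisely the set of $x$ for which the near-minimizing sets shrink in $\gamma$-diameter; completeness of $(C/\!\sim,d_\gamma)$ then yields a direction $u\in C$ with $f(q_n)+\langle q_n,x\rangle\to v(x)$ forcing $\gamma(q_n,u)\to 0$, which is the conclusion. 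So it remains to prove each $F_k$ is porous.

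For porosity of $F_k$: given $x_0\in X$ and a small radius $r>0$, I want to find $z$ and $\lambda_0>0$ (depending only on $k$, not on $x_0$ or $r$) with $\mathring B_X(z;\lambda_0 r)\subset \mathring B_X(x_0;r)\setminus F_k$. Pick $q_0\in C$ nearly attaining $v(x_0)$, say $f(q_0)+\langle q_0,x_0\rangle< v(x_0)+\beta r$ for a small $\beta$ to be fixed. Apply the $w^*\mathcal{USP}$ with $\varepsilon=1/k$ to the point $p=q_0$: there is $x_{q_0,1/k}\in B_X$ with $\langle q_0,x_{q_0,1/k}\rangle-\varpi_C(1/k)\ge\langle q,x_{q_0,1/k}\rangle$ for all $q$ with $\gamma(q,q_0)\ge 1/k$. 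Now set $z:=x_0-tr\,x_{q_0,1/k}$ for a suitable $t\in(0,1)$. The effect of subtracting $tr\,x_{q_0,1/k}$ is to lower the value of $f(\cdot)+\langle\cdot,z\rangle$ at $q_0$ by about $tr\langle q_0,x_{q_0,1/k}\rangle$ relative to its value at any far-away $q$, with a definite gap of size $\ge tr\,\varpi_C(1/k)$; choosing $\beta r$ much smaller than $tr\,\varpi_C(1/k)$ guarantees that for $z$ — and for any $x'$ within $\lambda_0 r$ of $z$ with $\lambda_0$ small relative to $t\varpi_C(1/k)$ — every $\eta$-near-minimizer with $\eta$ small must lie within $\gamma$-distance $1/k$ of $q_0$, hence no two near-minimizers can be $1/k$-apart, so $x'\notin F_k$. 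One checks $\|z-x_0\|=tr<r$ and $\lambda_0$ comes out as an absolute multiple of $t\,\varpi_C(1/k)$, independent of $x_0,r$; this is exactly the porosity statement, and $F:=\bigcup_k F_k$ is $\sigma$-porous.

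The main obstacle I expect is the bookkeeping in the porosity step — tracking how the perturbation $tr\,x_{q_0,1/k}$ interacts simultaneously with the slack $\beta r$ in the choice of $q_0$, the separation gap $\varpi_C(1/k)$, and the radius $\lambda_0 r$ of the free ball, so that all the inequalities close with constants that depend only on $k$. A secondary subtlety is purely formal: since $\gamma$ is only a pseudometric, "the near-minimizing set has small $\gamma$-diameter at small levels" must be shown to deliver an honest $\gamma$-Cauchy sequence whose limit in $C/\!\sim$ lifts to a direction $u\in C$; this is where completeness of $(C/\!\sim,d_\gamma)$ and the definition of $\gamma$-strongly-directional attainment are used, and one must be careful that $u$ is only determined up to $\sim$, consistent with the remark following Definition~\ref{Gdir}.
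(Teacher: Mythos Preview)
Your proposal is correct and follows essentially the same route as the paper: pick a near-minimizer $q_0$, use the $w^*\mathcal{USP}$ to produce a direction in $B_X$ separating $q_0$ from everything $\gamma$-far, perturb by a small multiple of that direction, and verify that an entire ball around the perturbed point lands in the good set; completeness of $(C/\!\sim,d_\gamma)$ then delivers the direction $u$. The paper's porosity computation is exactly your step with $t=1/2$ and $\lambda_0=\min(1/4,\varpi_C(1/n)/8D)$.

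One small slip in your bookkeeping: from ``every $\eta$-near-minimizer lies within $\gamma$-distance $1/k$ of $q_0$'' the triangle inequality only gives that any two such near-minimizers are within $2/k$ of each other, not $1/k$, so as written you obtain $x'\notin F_{\lceil k/2\rceil}$ rather than $x'\notin F_k$. This is harmless after reindexing (or after applying the $w^*\mathcal{USP}$ with $\varepsilon=1/(2k)$ instead). The paper avoids this loss by packaging the good sets as
\[
O_n=\bigl\{x\in X:\ \exists\,p_n\in C,\ (f+\hat x)(p_n)<\inf\{(f+\hat x)(p):p\in C,\ \gamma(p,p_n)\ge 1/n\}\bigr\},
\]
i.e.\ ``some point strictly beats everything at $\gamma$-distance $\ge 1/n$''; since this is precisely what the perturbation manufactures, no triangle-inequality doubling enters. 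The paper then extracts the Cauchy sequence $(p_n)$ directly from the $O_n$'s and argues the strong-directional conclusion by contradiction, which is equivalent to your shrinking-diameter argument.
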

\begin{proof} 
For each $n\in \N^*$, let
\begin{eqnarray*}
O_n=\lbrace x\in X/ \exists p_n\in C: (f+\hat{x})(p_n)< \inf \lbrace(f+\hat{x})(p): p\in C ; \gamma(p,p_n)\geq \frac{1}{n}  \rbrace\rbrace
\end{eqnarray*}
Let us prove that $O_n$ is the complement of porous set in $X$. We prove that for each $n\in \N^*$, the requirements of Definition \ref{prous} is satified with an arbitrary $r_n>0$ and 
\begin{eqnarray}\label{lambda}
\lambda_n =\min (\frac 1 4, \frac{1}{8D}\varpi_C(\frac 1 n))),
\end{eqnarray}
where $D:=\sup_{p\in C}\|p\|$ and $\varpi_C(\cdot)$ is the modulus of $w^*\mathcal{USP}$ of $C$. Indeed, let $y\in X$ and $0<\varepsilon <r_n$, we want to find $y_n\in X$ such that 
$$\mathring{B}_X(y+y_n, \lambda_n\varepsilon)\subset \mathring{B}_X(y,\varepsilon)\cap O_n.$$ 
Let $p_n \in C$ such that 
\begin{eqnarray} \label{eqq1}
 (f+\hat{y})(p_n)\leq  \inf_C (f+\hat{y}) +\lambda_n\varepsilon D.
\end{eqnarray}
Since $(C,\gamma)$ has the $w^*\mathcal{USP}$ in $X^*$, there exists $x_n\in B_X$ such that
\begin{eqnarray*} \label{eqR}
\langle p_n, x_n \rangle - \varpi_C(\frac 1 n) \geq \sup_{p\in C: \gamma(p, p_n)\geq \frac 1 n} \langle p, x_n \rangle.
\end{eqnarray*}
Equivalently, multiplying by $\frac{-\varepsilon}{2} $, we have
\begin{eqnarray} \label{eqb}
 \langle p_n,\frac{-\varepsilon}{2} x_n\rangle \leq \inf_{ p\in C  ; \gamma(p,p_n)\geq \frac{1}{n}} \langle p,\frac{-\varepsilon}{2} x_n\rangle -\frac{\varepsilon}{2}\varpi_C(\frac 1 n)\\\nonumber
\end{eqnarray}
Let us set $y_n=\frac{-\varepsilon}{2} x_n$. We prove that $\mathring{B}_X(y+y_n, \lambda_n\varepsilon)\subset \mathring{B}_X(y,\varepsilon)\cap O_n.$
Indeed, the fact that $\mathring{B}_X(y+y_n, \lambda_n\varepsilon)\subset \mathring{B}_X(y,\varepsilon)$ is clear since $\|y_n\|\leq \frac{\varepsilon}{2}$ and $\lambda_n\leq \frac 1 4$. Let us prove that $\mathring{B}_X(y+y_n, \lambda_n\varepsilon)\subset O_n$. Let $z\in X$ such that $\|z\|< \lambda_n \varepsilon$. From (\ref{eqb}) and the definition of $\lambda_n$, we get that
\begin{eqnarray*} 
\langle p_n, z+ y_n\rangle &=& \langle p_n, \frac{-\varepsilon}{2} x_n\rangle + \langle p_n, z\rangle\nonumber\\
&<& \inf_{ p\in C ; \gamma(p,p_n)\geq \frac{1}{n}} \langle p, y_n\rangle - \frac{\varepsilon}{2}\varpi_C(\frac 1 n)+ \lambda_n \varepsilon D\nonumber\\
&<& \inf_{ p\in C ; \gamma(p,p_n)\geq \frac{1}{n}} \langle p, y_n\rangle - \frac{\varepsilon}{2}\varpi_C(\frac 1 n)+\frac{\varepsilon}{4}\varpi_C(\frac 1 n)\nonumber \\
&=& \inf_{ p\in C ; \gamma(p,p_n)\geq \frac{1}{n}} \langle p, y_n\rangle - \frac{\varepsilon}{4}\varpi_C(\frac 1 n) \nonumber\\
&\leq& \inf_{ p\in C ; \gamma(p,p_n)\geq \frac{1}{n}} \langle p, y_n\rangle -2\lambda_n \varepsilon D\nonumber\\
&\leq& \inf_{ p\in C ; \gamma(p,p_n)\geq \frac{1}{n}} \langle p, z+ y_n\rangle -\lambda_n \varepsilon D\nonumber\\
\end{eqnarray*}  
Thus, we have that
\begin{eqnarray} \label{eq22} 
\langle p_n, z+ y_n\rangle &<& \inf_{ p\in C ; \gamma(p,p_n)\geq \frac{1}{n}} \langle p, z+ y_n\rangle -\lambda_n \varepsilon D
\end{eqnarray} 
 Using (\ref{eqq1}) and (\ref{eq22}), we get
\begin{eqnarray*} \label{eq3} 
(f+\hat{y}+\hat{y}_n +\hat{z})(p_n)&=& (f+\hat{y})(p_n) +\langle p_n, z+ y_n\rangle\nonumber\\
                             &\leq& \inf_C (f+\hat{y}) +\lambda_n \varepsilon D + \langle p_n, z+ y_n\rangle\nonumber\\
                             &<& \inf_C (f+\hat{y}) +\inf_{ p\in C ; \gamma(p,p_n)\geq \frac{1}{n}} \langle p, y_n+z\rangle \nonumber\\
                             &\leq& \inf_{ p\in C ; \gamma(p,p_n)\geq \frac{1}{n}}(f+\hat{y})(p) + \inf_{ p\in C ; \gamma(p,p_n)\geq \frac{1}{n}} \langle p, y_n +z\rangle\nonumber\\
                             &\leq& \inf_{ p\in C ; \gamma(p,p_n)\geq \frac{1}{n}}(f+\hat{y}+\hat{y}_n + \hat{z})(p)
\end{eqnarray*}
This shows that $y+y_n+z\in O_n$ for all $\|z\|< \lambda_n \varepsilon$. Hence, $\mathring{B}_X(y+y_n, \lambda_n\varepsilon)\subset O_n$. Finally, we proved that $\mathring{B}_X(y+y_n, \lambda_n\varepsilon)\subset \mathring{B}_X(y, \varepsilon)\cap O_n$. Hence,  $O_n$ is the complement of porous set in $X$. Consequently, $\cap_{n\in \N} O_n$ is the complement of a $\sigma$-porous set in $X$. 

To concludes the proof, we need to show that for every $x\in \cap_{n\in \N} O_n$ (the $\sigma$-porous set is $F=X\setminus \cap_{n\in \N} O_n$), $f+\hat {x}$ attains  $\gamma$-strongly-directionally its infinimum over $C$ at some direction $u\in C$. Indeed, let $x\in \cap_{n\in \N} O_n$, then for each $n\geq 1$, there exists $p_n \in C$ such that
\begin{eqnarray*}
(f+\hat{x})(p_n)&<& \inf_{ q\in C ; \gamma(q,p_n)\geq \frac{1}{n}}(f+\hat{x})(q).
\end{eqnarray*}
First, we show that the sequence $(p_n)$ is Cauchy sequence in $(C,\gamma)$ for the pseudometric $\gamma$. Indeed, we have that for each $k>n$, $\gamma(p_k, p_n)< \frac{1}{n}$ (otherwise, by the definition of $p_n$, we have $(f+\hat{x})(p_n)< (f+\hat{x})(p_k)$ and since $\gamma(p_k, p_n) \geq \frac{1}{n} > \frac{1}{k}$, by the definition of $p_k$ we have $(f+\hat{x})(p_k)< (f+\hat{x})(p_n)$ which is a contradiction). Thus, $(p_n)$ is a Cauchy sequence in the complete pseudometric  space  $(C,\gamma)$ converging to some $u\in C$ ($u$ is unique up to the relation  $\sim$). Now, we prove that $f+\hat {x}$ attains  $\gamma$-strongly-directionally its infinimum over $C$ at the direction $u\in C$. Indeed,  let $(q_k)\subset C$ be any sequence such that $(f+\hat{x})(q_k)$ converges to $\inf_C(f+\hat{x})$. Suppose by contradiction that $(q_k)$ does not converges to $u$ for the pseudometric $\gamma$. Extracting if necessary a subsequence, we can assume that there exists $\varepsilon>0$ such that for all $k\in \N^*$, $\gamma(q_k, u)\geq\varepsilon$. Thus, there exists an integer $m$ such that $\gamma(q_k, p_m) \geq \frac 1 m$ for all $k\in \N^*$. It follows that, 
\begin{eqnarray*}
\inf_C(f+\hat{x})&\leq& (f+\hat{x})(p_m)\\
                          &<& \inf_{ q\in C ; \gamma(q, p_m) \geq \frac{1}{m}}(f+\hat{x})(q)\\
                          &\leq& (f+\hat{x})(q_k),
\end{eqnarray*}
for all $k\in \N^*$, which contradict the fact that $(f+\hat{x})(q_k)$ converges to $\inf_C (f+\hat{x})$. This ends the proof.
\end{proof} 

\vskip5mm
Now, we investigate the case where the pseudometric $\gamma$ is a metric. Typically, in the following corollary, the metric $d$ can be the norm of the dual space $X^*$ or a distance compatible with the weak-star topology if $C$ is weak-star metrizable subset of $X^*$. 
\begin{corollary} \label{CThm0} Let $X$ be a Banach space and $C$ be a norm bounded subset of $X^*$. Suppose that $(C,d)$ is a complete metric space such that the identity map $I_C: (C,d)\rightarrow (C, \textnormal{weak}^*)$ is continuous. Suppose that $(C,d)$ has the $w^*\mathcal{USP}$ in $X^*$. Let $f: (C,d) \to \R\cup \lbrace +\infty \rbrace$ be a proper bounded from below and lower semi-continuous. Then, there exists a $\sigma$-porous subset $F$ of $X$ such that for every $x\in X\setminus F$, $f+\hat{x}$ has a strong minimum on $(C,d)$.
\end{corollary}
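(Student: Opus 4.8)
The plan is to deduce this from Theorem \ref{princ} by observing that when $\gamma = d$ is a genuine metric and $f$ is lower semicontinuous for $d$, the ``$\gamma$-strongly-directionally attains its infimum'' conclusion upgrades to the classical notion of strong minimum. Concretely, I would first invoke Theorem \ref{princ} with the pseudometric $\gamma := d$: since $(C,d)$ is a complete metric space it is in particular a complete pseudometric space, and it has the $w^*\mathcal{USP}$ in $X^*$ by hypothesis. This produces a $\sigma$-porous set $F\subset X$ such that for every $x\in X\setminus F$, the function $f+\hat x$ attains $d$-strongly-directionally its infimum over $C$ at some direction $u\in C$, i.e.\ every minimizing sequence $(q_n)$ for $f+\hat x$ satisfies $d(q_n,u)\to 0$.

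The remaining point is to check that at such a point $u$ we actually have $(f+\hat x)(u)=\inf_C(f+\hat x)$ and that $u$ is a genuine strong minimum. Here I would use lower semicontinuity: $\hat x$ is norm-continuous on $X^*$, and since $I_C:(C,d)\to(C,\mathrm{weak}^*)$ is continuous, $\hat x$ restricted to $C$ is $d$-continuous; combined with the $d$-lower semicontinuity of $f$, the function $f+\hat x$ is $d$-lower semicontinuous on $C$. Take any minimizing sequence $(q_n)$; by the directional conclusion $d(q_n,u)\to 0$, so lower semicontinuity gives $(f+\hat x)(u)\le\liminf_n(f+\hat x)(q_n)=\inf_C(f+\hat x)$, hence $(f+\hat x)(u)=\inf_C(f+\hat x)$ and the infimum is attained at $u$. (One may produce a minimizing sequence since $f$ is proper and bounded below; e.g.\ pick $q_n$ with $(f+\hat x)(q_n)<\inf_C(f+\hat x)+1/n$.) Then for \emph{any} sequence $(p_n)\subset C$ with $(f+\hat x)(p_n)\to\inf_C(f+\hat x)=(f+\hat x)(u)$, the directional property again yields $d(p_n,u)\to 0$; this is exactly the definition of $f+\hat x$ having a strong minimum at $u$ on $(C,d)$.

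This is essentially a remark-level deduction: the paper already notes in the discussion following Definition \ref{Gdir} that ``in the particular case where $\gamma$ is a metric and $f$ is lower semicontinuous for $\gamma$, the $\gamma$-strongly-directionally infimum coincides with the classical notion of strong minimum.'' So I do not anticipate a real obstacle; the only thing to be careful about is verifying that $f+\hat x$ is indeed $d$-lower semicontinuous, which is where the hypothesis that $I_C$ is $d$-to-weak${}^*$ continuous is used (it guarantees each evaluation functional $\hat x$ is $d$-continuous on $C$, so adding it preserves lower semicontinuity). With that in hand the same $\sigma$-porous set $F$ from Theorem \ref{princ} works verbatim, and the proof is complete.
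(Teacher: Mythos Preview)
Your proposal is correct and follows essentially the same approach as the paper: apply Theorem~\ref{princ} with $\gamma=d$, then use the $d$-to-weak$^*$ continuity of $I_C$ to see that each $\hat x$ is $d$-continuous on $C$, hence $f+\hat x$ is $d$-lower semicontinuous and the $\gamma$-strongly-directional infimum upgrades to a genuine strong minimum (as remarked after Definition~\ref{Gdir}). Your write-up is somewhat more detailed than the paper's, but the argument is the same.
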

\begin{proof} Since $I_C$ is $d$-to-weak-star continuous, then for every $x\in X$, $\hat{x}: x^*\mapsto \langle x^*, x \rangle$ is continuous on $C$ for the metric $d$. Thus, $f+\hat{x}$ is lower semicontinuous on $(C,d)$ and so we can apply Theorem \ref{princ} with the complete metric space $(C,\gamma)=(C,d)$, observing in this case that $\gamma$-strongly-directionnaly infinimum attaining for $f+\hat{x}$, consides with the notion of strong minimum for the distance $d$.
\end{proof}
As immediat application, we obtain the following extension of Deville-Revalski theorem in \cite{DR}.  Recall from Example C in Section \ref{S1}, that $ {\bf (DR)} \Longrightarrow ({\bf H})$ but $({\bf H}) \not\Longrightarrow {\bf (DR)}$ in general.

\begin{corollary} \label{DRB} Let $(L,d)$ be a complete metric space and $(X,\|.\|_X)$ be a Banach space included in $C_b(L)$ such that 

$(a)$ $\|.\|_X\geq \alpha\|.\|_{\infty}$ on $X$, for some $\alpha> 0$.

$(b)$ $X$ satisfies the hypothesis $({\bf H})$.

Let $f: L \to \R\cup \lbrace +\infty \rbrace$ be a proper bounded from below lower semi-continuous function. Then, there exists a $\sigma$-porous subset $F$ of $X$ such that for every $h\in X\setminus F$, $f+h$ has a strong minimum on $L$.
\end{corollary}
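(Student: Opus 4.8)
The plan is to deduce Corollary~\ref{DRB} from Corollary~\ref{CThm0} applied to the set of Dirac measures $\delta(L)\subset X^*$ equipped with the metric $\tilde d(\delta_x,\delta_y):=d(x,y)$, exactly as announced at the end of Example~C. First I would record the three facts that make this identification legitimate. By hypothesis $(a)$ we have $\|\delta_x\|\le 1/\alpha$ for every $x\in L$, so $\delta(L)$ is norm bounded in $X^*$. By hypothesis $(b)$ together with the equivalence $({\bf H})\Longleftrightarrow(\delta(L),\tilde d)\text{ has the }w^*\mathcal{USP}$ established in Example~C, the metric space $(\delta(L),\tilde d)$ has the $w^*\mathcal{USP}$ in $X^*$. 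Completeness of $(\delta(L),\tilde d)$ is immediate since $x\mapsto\delta_x$ is an isometry from $(L,d)$ onto $(\delta(L),\tilde d)$ and $(L,d)$ is complete (here one uses that $X$ separates points of $L$, so the map is a bijection and $\tilde d$ is a genuine metric).

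Next I would check that the identity map $I_{\delta(L)}:(\delta(L),\tilde d)\to(\delta(L),\textnormal{weak}^*)$ is continuous: if $d(x_n,x)\to 0$ then for every $h\in X\subset C_b(L)$ we have $h(x_n)\to h(x)$, i.e.\ $\langle\delta_{x_n},h\rangle\to\langle\delta_x,h\rangle$, which is exactly weak$^*$ convergence of $\delta_{x_n}$ to $\delta_x$. Thus all hypotheses of Corollary~\ref{CThm0} are met for $C=\delta(L)$, $d=\tilde d$. It remains to transport the function $f$: given $f:L\to\R\cup\{+\infty\}$ proper, bounded below and lower semicontinuous on $(L,d)$, define $\tilde f:\delta(L)\to\R\cup\{+\infty\}$ by $\tilde f(\delta_x):=f(x)$. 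Since $x\mapsto\delta_x$ is an isometric bijection, $\tilde f$ is proper, bounded below and lower semicontinuous on $(\delta(L),\tilde d)$.

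Now apply Corollary~\ref{CThm0}: there is a $\sigma$-porous set $F\subset X$ such that for every $h\in X\setminus F$ the function $\tilde f+\hat h$ has a strong minimum on $(\delta(L),\tilde d)$ at some $\delta_{x_0}$. I would then unwind the definitions. For $h\in X$, $\hat h(\delta_x)=\langle\delta_x,h\rangle=h(x)$, so $(\tilde f+\hat h)(\delta_x)=f(x)+h(x)=(f+h)(x)$. A strong minimum of $\tilde f+\hat h$ at $\delta_{x_0}$ means: $(f+h)(x_0)=\inf_{x\in L}(f+h)(x)$, and whenever $(f+h)(x_n)\to\inf_L(f+h)$ we get $\tilde d(\delta_{x_n},\delta_{x_0})=d(x_n,x_0)\to 0$. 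That is precisely the assertion that $f+h$ has a strong minimum on $L$. Hence the same $\sigma$-porous set $F$ works for Corollary~\ref{DRB}.

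There is no serious obstacle here; the content is entirely in Theorem~\ref{princ}/Corollary~\ref{CThm0} and in the equivalence $({\bf H})\Longleftrightarrow w^*\mathcal{USP}$ of Example~C. The only points requiring a word of care are (i) verifying that $\tilde d$ is well defined and a genuine metric — this uses that $X$ separates points of $L$, which is part of the standing setup of Example~C; (ii) the $d$-to-weak$^*$ continuity of $I_{\delta(L)}$, which is automatic because $X\subset C_b(L)$; and (iii) noting that $f+h$ being bounded below on $L$ is guaranteed since $f$ is bounded below and $h\in X\subset C_b(L)$ is bounded, so $\tilde f+\hat h$ is indeed a legitimate input to Corollary~\ref{CThm0}. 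Assembling these observations gives the proof in a few lines.
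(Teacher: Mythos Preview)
Your proof is correct and follows essentially the same route as the paper: identify $L$ with $\delta(L)\subset X^*$ via the isometry $x\mapsto\delta_x$, verify the hypotheses of Corollary~\ref{CThm0} (norm boundedness from $(a)$, $w^*\mathcal{USP}$ from $(b)$ via Example~C, completeness and $\tilde d$-to-weak$^*$ continuity), transport $f$ to $\tilde f$, apply Corollary~\ref{CThm0}, and translate the conclusion back. The only cosmetic difference is that the paper observes that separation of points by $X$ is itself a \emph{consequence} of hypothesis $({\bf H})$ rather than a standing assumption, but this does not affect the argument.
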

\begin{proof} We set $C:=\delta(L):=\lbrace \delta_x : x\in L\rbrace\subset X^*$. The hypothesis $({\bf H})$ is equivalent to the fact that $(C,\tilde{d})$ has the $w^*\mathcal{USP}$ in $X^*$, where $\tilde{d}(\delta_x, \delta_y):=d(x,y)$ is a complete metric space. On the other hand, it is trivial that the identity map $I_C: (C,\tilde{d})\to (C,\textnormal{weak}^*)$ is continuous (by the continuity of the elements of $X$ on $(L,d)$). We apply Corollary \ref{CThm0} to $(C,\tilde{d})$ and the proper bounded from below and lower semi-continuous  function $\tilde{f} : (C,\tilde{d})\to \R\cup \lbrace +\infty \rbrace$ defined by $\tilde{f}(\delta_x):=f(x)$ for all $x\in L$ (note that $\tilde{f}$ is well defined since $X$ separates the points of $L$, which is a consequence of hypothesis $({\bf H})$).
\end{proof}
\begin{remark} Note that in the dual space $(C_b(L))^*$, the set $(\delta(L),w^*)$ is completely metrizable by the metric $\tilde{d}$, but in general $(\overline{\delta(L)}^{w^*},w^*)$, which coincide (up to homeomorphism) with the Stone-Čech compactification $\beta L$ of $L$,  is not metrizable (if $(L,d)$ is not compact). 
\end{remark}
Now, we give in the following theorem, a localisation to Theorem \ref{princ}.
\begin{theorem} \label{corprinc}  Let $X$ be a Banach space and $C$ be a norm bounded subset of the dual $X^*$. Suppose that $(C,\gamma)$ is a complete pseudometric space having the $w^*\mathcal{USP}$ in $X^*$. Let $f: C \to \R\cup \lbrace +\infty \rbrace$ be any proper bounded from below function. Then, there exists $a>0$ such that for every $\varepsilon \in ]0,a]$ and every $p^*\in C$ such that $f(p^*)< \inf_C f + \varepsilon \varpi_C(\varepsilon)$ (where , $\varpi_C(\varepsilon)$ denotes the modulus of the $w^*\mathcal{USP}$ of $C$, in Definition \ref{USP}), there exists $x\in X$ and  $u \in C$ such that

$(i)$ $\gamma(p^*, u)\leq \varepsilon$,

$(ii)$ $\|x\| < 2 \varepsilon,$

$(iii)$ $f+\hat {x}$ attains  $\gamma$-strongly-directionally its infinimum over $C$ at the direction $u$. 
\end{theorem}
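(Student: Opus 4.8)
The plan is to mimic the argument of Theorem \ref{princ}, but to work only inside a fixed small ball around a near-minimizer $p^*$, so that the single-step "porosity" construction of the preceding proof becomes a single-step localization producing a point $x$ of small norm together with a $\gamma$-strongly-directional minimizer $u$. Let $a>0$ be the constant from the $w^*\mathcal{USP}$ of $(C,\gamma)$ in Definition \ref{USP}, and set $D:=\sup_{p\in C}\|p\|$. Fix $\varepsilon\in]0,a]$ and $p^*\in C$ with $f(p^*)<\inf_C f+\varepsilon\varpi_C(\varepsilon)$. Since $(C,\gamma)$ has the $w^*\mathcal{USP}$, there exists $x_\varepsilon\in B_X$ with
\begin{eqnarray*}
\langle p^*, x_\varepsilon\rangle - \varpi_C(\varepsilon) \geq \sup_{q\in C:\ \gamma(q,p^*)\geq\varepsilon}\langle q, x_\varepsilon\rangle .
\end{eqnarray*}
First I would take $x:=-\varepsilon x_\varepsilon$, so that $\|x\|\leq\varepsilon<2\varepsilon$, which will give $(ii)$; the scaling by $\varepsilon$ (rather than $\varepsilon/2$ as in the previous proof) is what creates enough room to absorb the defect $\varepsilon\varpi_C(\varepsilon)$ in the hypothesis $f(p^*)<\inf_C f+\varepsilon\varpi_C(\varepsilon)$. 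The key inequality to record is that, multiplying the displayed estimate by $-\varepsilon$,
\begin{eqnarray*}
\langle p^*, x\rangle \leq \inf_{q\in C:\ \gamma(q,p^*)\geq\varepsilon}\langle q, x\rangle - \varepsilon\,\varpi_C(\varepsilon).
\end{eqnarray*}

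Next I would show that the perturbed function $g:=f+\hat{x}$ satisfies $g(p^*)<\inf\{g(q):q\in C,\ \gamma(q,p^*)\geq\varepsilon\}$: combining the hypothesis on $f(p^*)$, the above inequality, and $\inf_C f\leq f(q)$ for every $q$, one gets
\begin{eqnarray*}
g(p^*)=f(p^*)+\langle p^*,x\rangle < \inf_C f + \varepsilon\varpi_C(\varepsilon) + \inf_{q:\gamma(q,p^*)\geq\varepsilon}\langle q,x\rangle - \varepsilon\varpi_C(\varepsilon) \leq \inf_{q:\gamma(q,p^*)\geq\varepsilon} g(q).
\end{eqnarray*}
This is exactly the condition "$p^*$ plays the role of $p_1$ for $g$" in the notation $O_n$ of Theorem \ref{princ}'s proof (with $n=1$ replaced by the threshold $\varepsilon$). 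Then I would run the same Cauchy-sequence argument as in Theorem \ref{princ}: iterating with thresholds $\varepsilon/2,\varepsilon/3,\dots$ one would need, at each stage, a further perturbation; but here the cleaner route is simply to note that once we have a point $x$ with $g=f+\hat x$ strictly separating $p^*$ from the $\varepsilon$-far part of $C$, we can directly apply Theorem \ref{princ} to $g$ on $C$ — no, more carefully, we apply the \emph{already proven} Theorem \ref{princ} to the function $f$ (not $g$) on the \emph{sub-pseudometric-space} $C':=\{q\in C:\gamma(q,p^*)<\varepsilon\}$ after checking $C'$ still has the $w^*\mathcal{USP}$ with the same modulus (Remark 1 on subsets), is still complete (it is not closed, so instead I would use the closed ball $\overline{C'}=\{q:\gamma(q,p^*)\leq\varepsilon\}$, which is $\gamma$-closed in the complete space $(C/\!\sim,d_\gamma)$ hence complete), and that $f$ restricted there is still proper bounded below. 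That produces, for $x$ outside a $\sigma$-porous set of $X$, a $\gamma$-strongly-directional minimizer $u$ of $f+\hat x$ over $\overline{C'}$.

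The remaining points to nail down are that $(a)$ one can choose the perturbation $x$ from Theorem \ref{princ} to \emph{also} have small norm — for this I would invoke the localized porosity more carefully, or simply observe that the $\sigma$-porous exceptional set, being of first category, cannot contain the ball $\mathring B_X(-\varepsilon x_\varepsilon,\lambda\varepsilon)$ for the appropriate $\lambda$, so a suitable $x$ with $\|x\|<2\varepsilon$ exists; and $(b)$ that a $\gamma$-strongly-directional minimizer $u$ of $f+\hat x$ over $\overline{C'}$ is automatically one over all of $C$, which follows from the strict separation inequality above: any sequence $(q_k)\subset C$ with $(f+\hat x)(q_k)\to\inf_C(f+\hat x)$ must eventually lie in $C'$, since a tail in $\{\gamma(q,p^*)\geq\varepsilon\}$ would have values bounded below by $\inf_{\gamma(q,p^*)\geq\varepsilon}(f+\hat x)(q)>(f+\hat x)(p^*)\geq\inf_C(f+\hat x)$, a contradiction; and then $\inf_{C'}(f+\hat x)=\inf_C(f+\hat x)$ forces convergence $\gamma(q_k,u)\to 0$. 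Finally $(i)$, namely $\gamma(p^*,u)\leq\varepsilon$, holds because $u\in\overline{C'}$. The main obstacle I anticipate is bookkeeping the two competing roles of $x$: it must simultaneously be taken small (norm $<2\varepsilon$) and generic (outside the $\sigma$-porous set), and one has to verify these two requirements are compatible — i.e. that the ball of radius-$\varepsilon$-order perturbations is not swallowed by the exceptional set, which is where the quantitative modulus $\varpi_C(\varepsilon)$ and the explicit $\lambda_n$ from the proof of Theorem \ref{princ} must be threaded through.
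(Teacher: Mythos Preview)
Your route differs from the paper's and leaves precisely the step you flag unfinished. The paper never restricts to the $\gamma$-ball $\overline{C'}$; it applies Theorem~\ref{princ} to the shifted function $h=f-\lambda_{\varepsilon,\theta}\,\hat x_\varepsilon$ on the \emph{whole} of $C$, with the coefficient
\[
\lambda_{\varepsilon,\theta}=(1+\theta)\,\frac{f(p^*)-\inf_C f+\theta\,\varpi_C(\varepsilon)}{\varpi_C(\varepsilon)}<(1+\theta)(\varepsilon+\theta),
\]
so that a generic perturbation $y$ with $\|y\|<\theta\varpi_C(\varepsilon)/(2D)$ (available because the exceptional set is $\sigma$-porous) gives $x:=y-\lambda_{\varepsilon,\theta}x_\varepsilon$ satisfying (ii) and (iii) on all of $C$ at once. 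The localization (i) is obtained \emph{afterwards} by contradiction: if $\gamma(p^*,u)>\varepsilon$, a minimizing sequence $p_n\to u$ for $h+\hat y$ eventually has $\gamma(p^*,p_n)>\varepsilon$, so the $w^*\mathcal{USP}$ inequality gives $\langle p^*-p_n,x_\varepsilon\rangle\ge\varpi_C(\varepsilon)$, while comparing $(h+\hat y)(p_n)$ with $(h+\hat y)(p^*)$ forces $\liminf_n\langle p^*-p_n,x_\varepsilon\rangle\le\varpi_C(\varepsilon)/(1+\theta)$. The point of the tuned $\lambda_{\varepsilon,\theta}$ (rather than your fixed $\varepsilon$) is exactly to make this last ratio strictly below $\varpi_C(\varepsilon)$.

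Your obstacle is a genuine gap. The strict separation $g(p^*)<\inf_{\gamma(q,p^*)\ge\varepsilon}g(q)$ is established only for the single vector $x=-\varepsilon x_\varepsilon$, and its margin is $\eta:=\inf_C f+\varepsilon\varpi_C(\varepsilon)-f(p^*)$, a quantity depending on $p^*$ and possibly far smaller than $\varepsilon\varpi_C(\varepsilon)$. When you then move $x$ to a nearby generic point outside the $\sigma$-porous set for $f$ on $\overline{C'}$, your step~(b) needs the separation to survive for the \emph{new} $x$, which you have not checked; without it, a minimizing sequence over $C$ need not enter $C'$ and your strong minimum over $\overline{C'}$ does not upgrade to one over $C$. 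The fix is to restrict the perturbation to norm $<\eta/(2D)$ so that the inequality persists (a two-line computation), but you must carry the $p^*$-dependent $\eta$ through; the paper's approach avoids this bookkeeping entirely by never passing to a subset and getting (iii) on $C$ directly.
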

\begin{proof}
From the definition of the $w^*\mathcal{USP}$ (see Definition \ref{prous}), there exists $a>0$ such that for every $\varepsilon>0$, there exists $x_\varepsilon\in B_X$ such that 
$$(\bullet) \hspace{3mm} \langle p^*, x_\varepsilon\rangle -\varpi_C(\varepsilon) \geq \sup \lbrace \langle q, x_\varepsilon\rangle:  q\in C; \gamma(q, p^*) \geq \varepsilon\rbrace.$$
For evey $\theta>0$, let us set 
\begin{eqnarray*}
\lambda_{\varepsilon, \theta} &:=& (1+\theta)\frac{(f(p^*)-\inf_C f +\theta \varpi_C(\varepsilon))}{\varpi_C(\varepsilon)}.
\end{eqnarray*}
Then, clearly we have 
\begin{eqnarray}\label{lambda}
0<\lambda_{\varepsilon, \theta} &<& \frac{(1+\theta)(\varepsilon\varpi_C(\varepsilon) + \theta \varpi_C(\varepsilon)))}{\varpi_C(\varepsilon)}\\
&=& (1+\theta)(\varepsilon+ \theta )\nonumber
\end{eqnarray}
Now, we apply Theorem \ref{princ} to the function $h= f-\lambda_{\varepsilon, \theta}\hat{x}_\varepsilon$. Thus, there exists $y\in X$ and an element $u\in C$ such that $\|y\|<\frac{\theta\varpi_C(\varepsilon)}{2D}$ (where, $D:=\sup_{q\in C}\|q\|$) and $f-\lambda_{\varepsilon, \theta}\hat{x}_\varepsilon+\hat {y}$ attains $\gamma$-strongly-directionally its infinimum on $C$ at the direction $u$. Let us choose a sequence $(p_n)\subset C$ such that $$\lim_{n\rightarrow +\infty} (f-\lambda_{\varepsilon, \theta}\hat{x}_\varepsilon +\hat {y})(p_n)=\inf_C (f-\lambda_{\varepsilon, \theta}\hat{x}_\varepsilon+\hat {y}).$$ 

Then we have that,

$$(\bullet\bullet) \hspace{3mm}  \lim_{n\rightarrow+\infty} \gamma(p_n,u)=0. $$

On the other hand, we have that
\begin{eqnarray*}
\inf_C f +\liminf_{n\rightarrow+\infty} (-\lambda_{\varepsilon, \theta}\hat{x}_\varepsilon +\hat {y})(p_n) &\leq & \lim_{n\rightarrow +\infty} (f-\lambda_{\varepsilon, \theta}\hat{x}_\varepsilon +\hat {y})(p_n)\\
                                                &=& \inf_C (f-\lambda_{\varepsilon, \theta}\hat{x}_\varepsilon+\hat {y})\\
                                                &\leq& (f-\lambda_{\varepsilon, \theta}\hat{x}_\varepsilon +\hat {y})(p^*)
\end{eqnarray*}
Using the above inequality and the fact that $\|y\|<\frac{\theta \varpi_C(\varepsilon)}{2D}$ (where, $D:=\sup_{q\in C}\|q\|$), we get 
\begin{eqnarray*}
 \liminf_{n\rightarrow+\infty} -\lambda_{\varepsilon, \theta}\hat{x}_\varepsilon(p_n) &\leq & f(p^*)- \inf_C f -\lambda_{\varepsilon, \theta}\hat{x}_\varepsilon(p^*) +\theta\varpi_C(\varepsilon)
\end{eqnarray*}
Equivalently, 
\begin{eqnarray*}
 \liminf_{n\rightarrow+\infty}  \langle p^*- p_n, x_\varepsilon \rangle &\leq & \frac{f(p^*)- \inf_C f +\theta\varpi_C(\varepsilon)}{\lambda_{\varepsilon, \theta}}\\
 &=& \frac{1}{1+\theta}\varpi_C(\varepsilon).
\end{eqnarray*}
\noindent {\bf Claim.} We have that $\gamma(p^*,u)\leq\varepsilon$.
\begin{proof}[Proof of the claim.] Suppose that the contrary hold, that is $\gamma(p^*,u)>\varepsilon$. Then, from $(\bullet\bullet)$, there exists an integer $N$ such that for every $n\geq N$, we have that $\gamma(p^*,p_n)>\varepsilon$. Using $(\bullet)$ we see that $\liminf_{n\rightarrow+\infty}  \langle p^*- p_n, x_\varepsilon \rangle\geq \varpi_C(\varepsilon)$, which is a contradiction since $\theta>0$.
\end{proof}
\vskip5mm
Now, let us set $x:=y-\lambda_{\varepsilon, \theta}\hat{x}_\varepsilon$. Using the formula of  $\lambda_{\varepsilon, \theta}$ with (\ref{lambda}) we get (since $x_\varepsilon\in B_X$)
\begin{eqnarray*}
\|x\|&\leq& \|y\|+\lambda_{\varepsilon, \theta}\\
     &\leq& \frac{\theta\varpi_C(\varepsilon)}{2D} + \lambda_{\varepsilon, \theta}\\
     &<&  \frac{\theta\varpi_C(\varepsilon)}{2D} + (1+\theta)(\varepsilon+\theta).
\end{eqnarray*}
We can choose and fix $\theta>0$ sufficiently small so that we have $\|x\|<2\varepsilon$. This ends the proof of the theorem.
\end{proof}
Similarily to Corollary \ref{CThm0}, using  Theorem \ref{corprinc}, we obtain the following  localization.
\begin{corollary} \label{CThm} Let $X$ be a Banach space and $C$ be a norm bounded subset of $X^*$.  Let $(C,d)$ is a complete metric space such that the identity $I_C: (C,d)\rightarrow (C, \textnormal{weak}^*)$ is continuous. Suppose that $(C,d)$ has the $w^*\mathcal{USP}$. Let $f: (C,d) \to \R\cup \lbrace +\infty \rbrace$ be a proper bounded from below and lower semi-continuous. Then, there exists $a>0$ such that for every $\varepsilon \in ]0,a]$ and every $p^*\in C$ such that $f(p^*)< \inf_C f + \varepsilon \varpi_C(\varepsilon)$ (where , $\varpi_C(\varepsilon)$ denotes the modulus of the $w^*\mathcal{USP}$ of $C$), there exists $x\in X$ and  $u \in C$ such that

$(i)$ $d(p^*, u)\leq \varepsilon$,

$(ii)$ $\|x\| < 2 \varepsilon,$

$(iii)$ $f+\hat {x}$ attains  its strong minimum on $C$ at  $u$. 
\end{corollary}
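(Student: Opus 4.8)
The plan is to deduce this corollary from Theorem~\ref{corprinc} in exactly the same way that Corollary~\ref{CThm0} was deduced from Theorem~\ref{princ}: the pseudometric in Theorem~\ref{corprinc} is taken to be the metric $\gamma := d$, and the only thing to verify is that the hypotheses transfer and that, in this metric setting, the conclusion "$f+\hat x$ attains $\gamma$-strongly-directionally its infimum at $u$" coincides with "$f+\hat x$ has a strong minimum at $u$."

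First I would note that since $I_C:(C,d)\to(C,\mathrm{weak}^*)$ is continuous, for each $x\in X$ the evaluation map $\hat x: x^*\mapsto\langle x^*,x\rangle$ is $d$-continuous on $C$; hence $f+\hat x$ is proper, bounded from below, and lower semicontinuous on $(C,d)$ whenever $f$ is. Thus $(C,d)$ is a complete pseudometric space (in fact a metric space) having the $w^*\mathcal{USP}$ in $X^*$, and $f$ meets the hypotheses of Theorem~\ref{corprinc}. Applying that theorem produces the constant $a>0$ and, for every $\varepsilon\in\,]0,a]$ and every $p^*\in C$ with $f(p^*)<\inf_C f+\varepsilon\varpi_C(\varepsilon)$, an $x\in X$ and a direction $u\in C$ satisfying $(i)$ $d(p^*,u)\le\varepsilon$, $(ii)$ $\|x\|<2\varepsilon$, and $(iii)$ $f+\hat x$ attains $\gamma$-strongly-directionally its infimum over $C$ at $u$.

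It then remains to reinterpret $(iii)$. Because $\gamma=d$ is here a genuine metric and $f+\hat x$ is lower semicontinuous for $d$, the discussion following Definition~\ref{Gdir} applies: the infimum of $f+\hat x$ over $C$ is attained at $u$, and $d$-strongly-directional attainment of the infimum is exactly the classical strong minimum, i.e. for every sequence $(q_n)\subset C$ with $(f+\hat x)(q_n)\to\inf_C(f+\hat x)=(f+\hat x)(u)$ one has $d(q_n,u)\to 0$. This turns $(iii)$ into "$f+\hat x$ attains its strong minimum on $C$ at $u$," which is the assertion to be proved.

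Since each step is a routine transcription of the corresponding argument for Corollary~\ref{CThm0}, I do not anticipate any genuine obstacle; the only point that needs care—and it has already been settled in the remarks after Definition~\ref{Gdir}—is the identification of the $\gamma$-strongly-directional infimum with the strong minimum in the presence of a metric $\gamma$ and a lower semicontinuous perturbed function.
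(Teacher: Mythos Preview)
Your proof is correct and follows exactly the approach indicated by the paper, which simply states that the corollary is obtained ``similarly to Corollary~\ref{CThm0}, using Theorem~\ref{corprinc}.'' You have spelled out precisely this transcription: apply Theorem~\ref{corprinc} with $\gamma=d$, use the $d$-to-weak$^*$ continuity of $I_C$ to ensure $f+\hat x$ is lower semicontinuous, and then invoke the remark after Definition~\ref{Gdir} to identify $\gamma$-strongly-directional attainment with the classical strong minimum.
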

\section{Porosity of the set of norm nonattaining operators}\label{S3}

Let $(K,d)$ be a complete metric space, $Y$ be a Banach space and $S_{Y^*}$ be the unit sphere of its dual.  By $C_b(K,Y)$, we denote the Banach space of all $Y$-valued bounded continuous functions equipped with the sup-norm. For every $(x, y^*)\in K\times S_{Y^*}$, we define the evaluation maps $\delta_x: T\mapsto T(x)$  and $y^*\circ \delta_x: T \mapsto \langle y^*, T(x) \rangle$, for all $T\in C_b(K,Y)$.  For any Banach space  $(Z, \|\cdot\|_Z)$ included in  $(C_b(K,Y)$ and such that $\|\cdot\|_Z\geq \|\cdot\|_{\infty}$, we have that $y^*\circ \delta_x \in Z^*$ for each $(x, y^*)\in L\times S_{Y^*}$. We suppose that the space $Z$ satisfies the following identity
\begin{eqnarray}\label{I}
 y^*_1\circ \delta_{x_1} =y^*_2\circ \delta_{x_2} \textnormal{ on } Z \Longrightarrow x_1=x_2 \textnormal{ and } y^*_1=y^*_2.
\end{eqnarray}
Let  $C_K:=\lbrace y^*\circ \delta_x: x\in K, y^*\in S_{Y^*}\rbrace\subset Z^*$. We define the complete pseudometric on $C_K$  as follows:
$$\gamma_\mathcal{P}(y^*\circ \delta_x, z^*\circ \delta_{x'}):=d(x,x'); \hspace{1mm} \forall y^*\circ \delta_x, z^*\circ \delta_{x'}\in C_K.$$

\begin{lemma} \label{property1} Let $(K,d)$ be a complete metric space, $Y$ be Banach spaces and $(Z,\|\cdot\|_Z)$ be a Banach space included in $C_b(K,Y)$ and satisfying:  

$(a)$ $\|.\|_Z\geq \|.\|_{\infty}$.

$(b)$ For every $\varepsilon>0$ there exists $\varpi_K(\varepsilon)>0$ and a collection $\lbrace b_{x,\varepsilon}: x\in K \rbrace\subset C_b(K,\R)$ such that, for every $e\in S_Y$ and every $x\in K$, we have that $b_{x,\varepsilon}.e\in Z$, $(\|b_{x,\varepsilon}\|_{\infty}\leq) \|b_{x,\varepsilon}.e\|_Z\leq 1$  and
\begin{eqnarray} \label{bump}
b_{x,\varepsilon}(x)-\varpi_K(\varepsilon)\geq \sup_{x'\in K:d(x',x)\geq \varepsilon} |b_{x,\varepsilon}(x')|.
\end{eqnarray}
Then, $Z$ satisfies the identity (\ref{I}) and the set $(C_K, \gamma_\mathcal{P})$  is a complete pseudometric space having the $w^*\mathcal{USP}$ in $Z^*$.
\end{lemma}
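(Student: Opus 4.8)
The plan is to establish the statement in three stages: the implication (\ref{I}), then that $\gamma_{\mathcal P}$ is a well defined \emph{complete} pseudometric on $C_K$, and finally the $w^*\mathcal{USP}$ with an explicit modulus. Throughout I may assume $Y\neq\{0\}$, the opposite case being trivial since then $C_K=\emptyset$. \emph{Proof of (\ref{I}).} Assume $y_1^*\circ\delta_{x_1}=y_2^*\circ\delta_{x_2}$ on $Z$. I first show $x_1=x_2$, arguing by contradiction: if $\varepsilon:=d(x_1,x_2)>0$, hypothesis $(b)$ at the point $x_1$ yields $b:=b_{x_1,\varepsilon}$; since $x_2$ belongs to the set over which the supremum in (\ref{bump}) is taken, that supremum is $\geq 0$, hence $1\geq\|b\|_\infty\geq b(x_1)\geq\varpi_K(\varepsilon)>0$ by $(a)$, $(b)$ and (\ref{bump}). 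For each $e\in S_Y$ the function $b\cdot e$ lies in $Z$, so evaluating both functionals gives $b(x_1)\langle y_1^*,e\rangle=b(x_2)\langle y_2^*,e\rangle$. For arbitrary $\eta\in(0,1)$ choose $e\in S_Y$ with $\langle y_1^*,e\rangle>1-\eta$ (possible since $\|y_1^*\|=1$); then $b(x_1)(1-\eta)<b(x_2)\langle y_2^*,e\rangle\leq|b(x_2)|\leq b(x_1)-\varpi_K(\varepsilon)$ by (\ref{bump}), whence $\varpi_K(\varepsilon)<\eta\,b(x_1)\leq\eta$. Letting $\eta\to 0^+$ contradicts $\varpi_K(\varepsilon)>0$, so $x_1=x_2=:x$. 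Fixing then any $\varepsilon>0$ and $b:=b_{x,\varepsilon}$ (with $b(x)>0$), the equality $b(x)\langle y_1^*,e\rangle=b(x)\langle y_2^*,e\rangle$ for every $e\in S_Y$ forces $y_1^*=y_2^*$.

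\emph{$\gamma_{\mathcal P}$ is a complete pseudometric.} By (\ref{I}), writing an element of $C_K$ in the form $y^*\circ\delta_x$ determines $x$ uniquely, so $\gamma_{\mathcal P}$ is well defined; symmetry, the triangle inequality and $\gamma_{\mathcal P}(p,p)=0$ are inherited from $d$. Moreover $y_1^*\circ\delta_{x_1}\sim y_2^*\circ\delta_{x_2}$ exactly when $x_1=x_2$, so $[y^*\circ\delta_x]\mapsto x$ is a well defined bijection from $C_K/\!\sim$ onto $K$ (surjective because $S_{Y^*}\neq\emptyset$), and it is an isometry for $d_{\gamma_{\mathcal P}}$ and $d$. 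Since $(K,d)$ is complete, $(C_K,\gamma_{\mathcal P})$ is complete.

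\emph{The $w^*\mathcal{USP}$.} I would take $a:=1$ (any positive number works, as $(b)$ holds for all $\varepsilon>0$) and, for $\varepsilon\in(0,1]$, $\varpi_{C_K}(\varepsilon):=\tfrac12\varpi_K(\varepsilon)>0$. Given $p=y^*\circ\delta_x\in C_K$, choose $e_x\in S_Y$ with $\langle y^*,e_x\rangle>1-\tfrac12\varpi_K(\varepsilon)$ and set $x_{p,\varepsilon}:=b_{x,\varepsilon}\cdot e_x$, which lies in $B_Z$ by $(b)$. If $q=z^*\circ\delta_{x'}\in C_K$ satisfies $\gamma_{\mathcal P}(q,p)=d(x',x)\geq\varepsilon$, then $x'$ is in the far-away set of (\ref{bump}), so $b_{x,\varepsilon}(x)\geq\varpi_K(\varepsilon)>0$, and using $\|z^*\|=1$, $\|b_{x,\varepsilon}\|_\infty\leq 1$ and (\ref{bump}),
\begin{eqnarray*}
\langle p,x_{p,\varepsilon}\rangle-\langle q,x_{p,\varepsilon}\rangle
&=& b_{x,\varepsilon}(x)\langle y^*,e_x\rangle-b_{x,\varepsilon}(x')\langle z^*,e_x\rangle\\
&\geq& b_{x,\varepsilon}(x)\bigl(1-\tfrac12\varpi_K(\varepsilon)\bigr)-\bigl(b_{x,\varepsilon}(x)-\varpi_K(\varepsilon)\bigr)\\
&\geq& \varpi_K(\varepsilon)-\tfrac12\varpi_K(\varepsilon)\ =\ \varpi_{C_K}(\varepsilon),
\end{eqnarray*}
which is precisely the inequality of Definition \ref{USP}; and if no such $q$ exists that inequality is vacuous, so only $x_{p,\varepsilon}\in B_Z$ is needed. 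Hence $(C_K,\gamma_{\mathcal P})$ has the $w^*\mathcal{USP}$ in $Z^*$.

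\emph{Main obstacle.} Nothing here is deep; the single point requiring care is that a norm-one functional $y^*\in S_{Y^*}$ need not attain its norm on $S_Y$, which is why in each stage I work with an approximating unit vector $e$ (resp. $e_x$) and absorb the resulting slack ($\eta$, resp. $\tfrac12\varpi_K(\varepsilon)$) into the estimates. The only other thing to keep in mind is the degenerate possibility that $\{x'\in K:d(x',x)\geq\varepsilon\}$ is empty, in which case, as noted, the defining condition of the $w^*\mathcal{USP}$ carries no hypotheses and there is nothing to prove.
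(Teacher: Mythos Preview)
Your proof is correct and follows essentially the same strategy as the paper: identity~(\ref{I}) via the bump functions $b_{x,\varepsilon}\cdot e$, completeness of $(C_K,\gamma_{\mathcal P})$ inherited from $(K,d)$, and the $w^*\mathcal{USP}$ via the test element $b_{x,\varepsilon}\cdot e_x$ with $e_x$ almost norming $y^*$, yielding modulus $\tfrac12\varpi_K(\varepsilon)$. Your treatment of~(\ref{I}) through the $\eta\to 0$ approximation is in fact slightly more careful than the paper's kernel argument (which tacitly assumes $\langle y^*,e\rangle\neq 0$ for the chosen $e\in\mathrm{Ker}(y^*-z^*)$), and your bookkeeping of the degenerate case where $\{x':d(x',x)\geq\varepsilon\}=\emptyset$ is a welcome addition.
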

\begin{proof}  The fact that  $y^*\circ \delta_x \in Z^*$ for each $(x, y^*)\in K\times S_{Y^*}$, follows from part $(a)$. The map $\gamma_\mathcal{P}$ is well defined. Indeed, we prove that $y^*\circ \delta_x=z^*\circ \delta_{x'}$ implies that $x=x'$ and $y^*=z^*$. Let $e\in S_Y$ be such that $\langle y^*, e\rangle=\langle z^*, e\rangle$ ($e\in \textnormal{Ker}(y^*-z^*)$). Since $y^*\circ \delta_x(b_{x,\varepsilon}.e)=z^*\circ \delta_{x'}(b_{x,\varepsilon}.e)$ for every $\varepsilon >0$, it follows that $b_{x,\varepsilon}(x)=b_{x,\varepsilon}(x')$, for every $\varepsilon >0$, which implies that $x=x'$ by using the condition (\ref{bump}). Now,  we have that  $y^*\circ \delta_x(b_{x,\varepsilon}.e)=z^*\circ \delta_{x'}(b_{x,\varepsilon}.e)$ for every $e\in S_Y$. This implies (since $x=x'$) that $\langle y^*, e\rangle=\langle z^*, e\rangle$ for all $e\in S_Y$ and so $y^*=z^*$. Now, it is clear that $(C_K, \gamma_\mathcal{P})$  is a complete pseudometric space, since $(K,d)$ is a complete metric space. It remains to prove that $(C_K, \gamma_\mathcal{P})$  has the $w^*\mathcal{USP}$ is $Z^*$. Indeed, for every $y^*\in S_{Y^*}$ and $\varepsilon >0$, choose and fix an $e_{y^*,\varepsilon}\in S_Y$ such that $\langle y^*, e_{y^*,\varepsilon}\rangle >1-\frac{\varpi_{K}(\varepsilon)}{2(1+\varpi_{K}(\varepsilon))}>0$  and let us define for each  $(x, y^*)\in K\times S_{Y^*}$, the operator  $T_{(x, y^*,\varepsilon)}: X \to Y$ by $T_{(x, y^*,\varepsilon)}(x')=b_{x,\varepsilon}(x') e_{y^*,\varepsilon}$ for all $x'\in K$. By assumption,  $T_{(x, y^*,\varepsilon)}\in Z$ and $\|T_{(x_\lambda, y^*,\varepsilon)}\|_Z\leq 1$. On the other hand, for all $(x',z^*)\in K\times S_{Y^*}$ such that $d(x,x'):=\gamma_\mathcal{P}(y^*\circ \delta_x, z^*\circ \delta_{x'})\geq \varepsilon$, we have that: 
\begin{eqnarray*}
\langle y^*\circ \delta_x, T_{(x,y^*,\varepsilon)} \rangle - \frac{\varpi_K(\varepsilon)}{2}&=& \langle y^*, T_{(x,y^*,\varepsilon)}(x) \rangle - \frac{\varpi_{C_K}(\varepsilon)}{2}\\
&\geq& \langle y^*, T_{(x,y^*,\varepsilon)}(x) \rangle - \varpi_{C_K}(\varepsilon)\langle y^*, e_{y^*,\varepsilon}\rangle+\frac{\varpi_{C_K}(\varepsilon)}{2(1+\varpi_{C_K}(\varepsilon))}\\
&=& [b_{x,\varepsilon}(x)\langle y^*, e_{y^*,\varepsilon} \rangle- \varpi_{C_K}(\varepsilon)\langle y^*, e_{y^*,\varepsilon} \rangle]  +\frac{\varpi_{C_K}(\varepsilon)}{2(1+\varpi_{C_K}(\varepsilon))}\\
&\geq&   |b_{x,\varepsilon}(x')|\langle y^*, e_{y^*,\varepsilon}\rangle+\frac{\varpi_{C_K}(\varepsilon)}{2(1+\varpi_{C_K}(\varepsilon))}\\
&\geq& |b_{x,\varepsilon}(x')|(1 - \frac{\varpi_{C_K}(\varepsilon)}{2(1+\varpi_{C_K}(\varepsilon))}) +\frac{\varpi_{C_K}(\varepsilon)}{2(1+\varpi_{C_K}(\varepsilon))}\\
&\geq&  |b_{x,\varepsilon}(x')|, (\textnormal{ since },\|b_{x,\varepsilon}\|_{\infty}\leq 1)\\
&\geq& |b_{x,\varepsilon}(x')| |\langle z^*  ,  e_{y^*,\varepsilon} \rangle|\\
&=& |\langle z^*, T_{(x,y^*,\varepsilon)}(x') \rangle|\\
&=& |\langle z^*\circ \delta_{x'}, T_{(x,y^*,\varepsilon)} \rangle|\geq \langle z^*\circ \delta_{x'}, T_{(x,y^*,\varepsilon)} \rangle.
\end{eqnarray*}
It follows that $(C_K,\gamma_\mathcal{P})$ has the $w^*\mathcal{USP}$ is $Z^*$. 
\end{proof}
\begin{example}\label{NA-lin}
 Let $X$ be a Banach space with the property $(\alpha)$ and $Y$ be any Banach space.  Let $\lbrace x_\lambda: \lambda \in \Lambda \rbrace$, $\lbrace x^*_\lambda: \lambda \in \Lambda \rbrace$, subsets of $X$ and $X^*$ respectively, satisfying property $(\alpha)$. Let us set $K:=\overline{\lbrace x_\lambda: \lambda \in \Lambda \rbrace}^{\|\cdot\|}$. It is easy to see, thanks to parts $1)$ and $2)$ of property $(\alpha)$ (see Example B in Section \ref{S1}), that for every $\varepsilon>0$ there exists $\varpi_K(\varepsilon)>0$ such that,  for every $x\in K$, there exists $b_{x,\varepsilon}\in \lbrace x^*_\lambda: \lambda \in \Lambda \rbrace\subset S_{X^*}$ such that 
\begin{eqnarray*} 
\langle b_{x,\varepsilon},x\rangle -\varpi_K(\varepsilon)\geq \sup_{x'\in K:\|x'-x\|\geq \varepsilon} |\langle b_{x,\varepsilon}, x'\rangle|.
\end{eqnarray*}
Thus, every closed subspace $R(X,Y)$ of $B(X,Y)\subset (C_b(K,Y),\|\cdot\|_{\infty})$, contaninig $F(X,Y)$, satisfies $(a)$ and $(b)$ of Lemma \ref{property1}.
\end{example}
\begin{example} \label{Cont} By $C_b^u(K,Y)$, we denote the Banach space of all bounded uniformly continuous operators from a complete metric space $(K,d)$ to a Banach space $Y$ equipped with the sup-norm. It is easy to see that $(C_b^u(K,Y), \|\cdot\|_{\infty})$ and $(C_b(K,Y), \|\cdot\|_{\infty})$ satisfies $(a)$ and $(b)$ of Lemma \ref{property1}, with $b_{x,\varepsilon}: z \mapsto \max (0, 1-\frac{d(z,x)}{\varepsilon})$.
\end{example}
\begin{example}\label{Diff}
  Let $X$ be a Banach space such that there exists a  Lipschitz $C^1$-bump function from $X$ into $\R$ and  $Y$ be a Banach space. By $C_b^1(X,Y)$, we denote the Banach space of  all bounded continuously Fr\'echet differentiable functions from $X$ to $Y$ equipped with the norm: for all $f\in C_b^1(X,Y)$
$$\|f\|:=\max(\|f\|_{\infty}, \|f'\|_{\infty}).$$
The above Proposition applies to $Z=C_b^1(X,Y)$. 
\end{example}
\begin{remark} \label{Rbump} In the nonlinear operators case, the hypothesis in the formula $(\ref{bump})$ of Lemma \ref{property1} can be replaced by the following strong but fairly general and useful condition (the existence of "bump function" in $Z$):  For every $\varepsilon>0$ there exists a collection $\lbrace b_{x,\varepsilon}: x\in K, \rbrace\subset C_b(K,Y)$ such that, $b_{x,\varepsilon}.e\in Z$ and $\|b_{x,\varepsilon}.e\|\leq 1$, for every $e\in S_Y, x\in K, \varepsilon>0$ and satisfying:
\begin{eqnarray*} 
b_{x,\varepsilon}\geq 0; \hspace{1mm} b_{x,\varepsilon}(x)=1 \textnormal{ and  }b_{x,\varepsilon}(y)=0, \textnormal{ whenever } d(y,x)\geq \varepsilon.
\end{eqnarray*}
\end{remark}
 
A general and abstract statement on operator (linear or not) attaining their sup-norm is given in the following result. Lemma \ref{property1} gives a general criterion for which the following theorem applies. Example \ref{NA-lin},  Example \ref{Cont} and  Example \ref{Diff} are particular cases. 

\begin{theorem} \label{NA} Let  $(K,d)$ be a complete metric space and  $Y$ be a Banach space. Let $(Z, \|\cdot\|_Z)$ be a Banach space included in $C_b(K,Y)$ such that $\|\cdot\|_Z\geq \|\cdot\|_{\infty}$ and satisfying the identity $(\ref{I})$. Suppose that $(C_K,\gamma_\mathcal{P})$ is a complete pseudometric space having the $w^*\mathcal{USP}$ is $Z^*$. Then, for every $h \in C_b(K,Y)$,  the set
\begin{eqnarray*}
\mathcal{N}(h):=\lbrace g\in Z: h+g \textnormal{ does not attains strongly its sup-norm } \rbrace,
\end{eqnarray*}
is  a $\sigma$-porous subset of $(Z,\|\cdot\|_Z)$.

\noindent Moreover, the following “quantitative version” of the Bishop-Phelps-Bollobás theorem holds: for every $\varepsilon>0$, there exists $\lambda(\varepsilon) >0$ such that for every $f\in Z$, $\|f\|_{\infty}=1$ and every $x\in K$ satisfying $\|f(x)\|>1-\lambda(\varepsilon),$ there exists $k \in Z$, $\|k\|_{\infty}=1$ and $\overline{x}\in K$ such that 

$(i)$ $x\mapsto \|k(x)\|$ attains strongly  its maximum on $K$  at  $\overline{x}$, 

$(ii)$ $d(\overline{x},x)<\varepsilon$ and $\|f-k\|_{\infty}<\varepsilon.$
\end{theorem}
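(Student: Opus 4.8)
The plan is to reduce the statement to the localized linear variational principle (Theorem \ref{corprinc}) applied to the complete pseudometric space $(C_K,\gamma_{\mathcal P})$ sitting inside $Z^*$, which by hypothesis has the $w^*\mathcal{USP}$. The bridge between the sup-norm of an operator $g\in Z$ and the functionals in $C_K$ is the identity $\|g\|_\infty=\sup_{x\in K,\,y^*\in S_{Y^*}}\langle y^*\circ\delta_x,\,g\rangle=\sup_{p\in C_K}\langle p,g\rangle$, valid since $\|\cdot\|_Z\ge\|\cdot\|_\infty$ ensures $C_K\subset Z^*$ and the evaluation functionals recover the norm. First I would fix $h\in C_b(K,Y)$ and set $f:=-\widehat{h}$ restricted to $C_K$, i.e. $f(y^*\circ\delta_x)=-\langle y^*,h(x)\rangle$; this is well defined (by the identity (\ref{I})), bounded (since $h$ is bounded), and I will work with the \emph{supremum} version of Theorem \ref{princ}/\ref{corprinc} as noted just before Theorem \ref{princ}. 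For $g\in Z$, the quantity $\sup_{p\in C_K}(\widehat{g}-f)(p)=\sup_{p\in C_K}\langle p,h+g\rangle=\|h+g\|_\infty$.

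The key observation is a dictionary between \emph{$\gamma_{\mathcal P}$-strongly-directionally attaining the supremum of $\widehat{g}-f$ over $C_K$} and \emph{$h+g$ attaining strongly its sup-norm on $K$}. If a maximizing sequence $p_n=y_n^*\circ\delta_{x_n}$ for $\langle\cdot,h+g\rangle$ necessarily has $\gamma_{\mathcal P}(p_n,u)\to0$ for some fixed $u=v^*\circ\delta_{\bar x}$, then in particular $d(x_n,\bar x)\to 0$; conversely, any sequence $x_n\in K$ with $\|(h+g)(x_n)\|\to\|h+g\|_\infty$ gives rise (choosing norming $y_n^*$) to a maximizing sequence for $\langle\cdot,h+g\rangle$, forcing $d(x_n,\bar x)\to 0$. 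Thus the set of $g$ for which the variational conclusion fails is exactly $\mathcal N(h)$ up to this translation, and Theorem \ref{princ} (supremum version) produces a $\sigma$-porous set $F\subset Z$ outside of which $\widehat g-f$ attains its supremum $\gamma_{\mathcal P}$-strongly-directionally; hence $\mathcal N(h)\subset F$ is $\sigma$-porous. I should be slightly careful here: the variational principle perturbs by $\widehat x$ for $x\in X=Z$, and $\widehat x$ here means the evaluation of elements of $C_K\subset Z^*$ at $x\in Z$, which is precisely $p\mapsto\langle p,x\rangle$ — consistent with adding the operator $x$ to $h+g$.

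For the quantitative (Bishop–Phelps–Bollobás) part, I would instead invoke the localized Theorem \ref{corprinc}. Given $\varepsilon>0$, let $a>0$ be as in that theorem for $(C_K,\gamma_{\mathcal P})$; shrinking if needed take $\varepsilon\le a$. Set $\lambda(\varepsilon):=\tfrac12\varepsilon\,\varpi_{C_K}(\varepsilon)$ (or any positive quantity below $\varepsilon\varpi_{C_K}(\varepsilon)$), apply Theorem \ref{corprinc} to $f=-\widehat f$ (abuse: the fixed $f\in Z$ of the statement plays the role of ``$h+g$'' with $g=0$, $h=f$), with the point $p^*=y^*\circ\delta_x$ where $y^*\in S_{Y^*}$ is chosen with $\langle y^*,f(x)\rangle>1-\lambda(\varepsilon)$; since $\inf_{C_K}(-\widehat f)=-\|f\|_\infty=-1$, the hypothesis $(-\widehat f)(p^*)<\inf_{C_K}(-\widehat f)+\varepsilon\varpi_{C_K}(\varepsilon)$ holds. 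The theorem yields $x_0\in Z$ with $\|x_0\|<2\varepsilon$ and $u=v^*\circ\delta_{\bar x}\in C_K$ with $\gamma_{\mathcal P}(p^*,u)=d(x,\bar x)\le\varepsilon$ such that $-\widehat f+\widehat{x_0}$ attains its infimum over $C_K$ $\gamma_{\mathcal P}$-strongly-directionally at $u$; equivalently $f-x_0$ attains its sup-norm strongly at $\bar x$ (by the dictionary above). Finally set $k:=(f-x_0)/\|f-x_0\|_\infty$; then $x\mapsto\|k(x)\|$ attains its max strongly at $\bar x$, $\|k\|_\infty=1$, $d(\bar x,x)<\varepsilon$, and $\|f-k\|_\infty\le\|f-(f-x_0)\|_\infty+\bigl|1-\tfrac1{\|f-x_0\|_\infty}\bigr|\,\|f-x_0\|_\infty\le 2\|x_0\|<4\varepsilon$, so after replacing $\varepsilon$ by $\varepsilon/4$ at the outset we obtain the stated bounds.

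The main obstacle I anticipate is making the dictionary between $\gamma_{\mathcal P}$-strong-directional attainment of $\langle\cdot,h+g\rangle$ on $C_K$ and strong sup-norm attainment of $h+g$ on $K$ fully rigorous: the pseudometric $\gamma_{\mathcal P}$ only sees the $K$-coordinate, so a maximizing sequence $y_n^*\circ\delta_{x_n}$ controls $x_n$ but the functionals $y_n^*$ are free; one must check that $d(x_n,\bar x)\to0$ together with $\|(h+g)(x_n)\|\to\|h+g\|_\infty$ is exactly the required conclusion, and conversely that strong sup-norm attainment of $h+g$ yields the directional statement regardless of which norming functionals appear — this uses continuity of $h+g$ and compactness-free elementary estimates but deserves a careful paragraph. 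A secondary point is bookkeeping in the quantitative part to absorb all the $\varepsilon$-losses into a single final $\varepsilon$.
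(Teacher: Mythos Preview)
Your proposal is correct and follows essentially the same route as the paper: apply the supremum version of Theorem~\ref{princ} to the function $\hat h$ on $(C_K,\gamma_{\mathcal P})$, translate $\gamma_{\mathcal P}$-strong-directional attainment into strong sup-norm attainment via Hahn--Banach (choosing norming functionals $y_n^*$ for a maximizing sequence $x_n$), and for the quantitative part invoke Theorem~\ref{corprinc} at the near-maximizer $y^*\circ\delta_x$, then normalize $f-x_0$ to obtain $k$. The only cosmetic differences are that the paper front-loads the $\varepsilon/4$ (taking $\lambda(\varepsilon)=\tfrac{\varepsilon}{4}\varpi_{C_K}(\varepsilon/4)$) rather than rescaling at the end, and that for the $\sigma$-porosity part you only need the one direction of your ``dictionary'' (strong-directional $\Rightarrow$ strong sup-norm), which already gives $\mathcal N(h)\subset F$.
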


\begin{proof} Since $(C_K,\gamma_\mathcal{P})$  is a complete pseudometric space having the $w^*\mathcal{USP}$ in $Z^*$, by applying Theorem \ref{princ} (changing "infinimum" by "supremum") to $(C,\gamma_\mathcal{P})$ with the function 
\begin{eqnarray*}
\hat{h}: C_K &\to& \R\\
               y^*\circ \delta_x &\mapsto& \langle y^*, h(x)\rangle,
\end{eqnarray*}
we get  a $\sigma$-porous subset $\mathcal{N}(h)$  of $Z$ such that for every $f\in  Z\setminus \mathcal{N}(h)$, we have that $\hat{h}+\hat{f}$ attains $\gamma_\mathcal{P}$-strongly-directionally its supremum over $C_K$ at some direction $y_f^* \circ \delta_{x_f}\in C_K$. This implies that, for every $f\in Z\setminus \mathcal{N}(h)$, the function $\|(h+f)(\cdot)\|$ attains strongly its maximum on $K$  at  $x_f\in K$. Indeed, let $(u_n)\subset K$ such that $\|(h+f)(u_n)\|\to \|h+f\|_{\infty}$. 
 By the Hahn-Banach theorem, there exists  $(y^*_n)\subset S_{Y^*}$ such that $\|(h+f)(u_n)\| =\langle y^*_{n}, (h+f)(u_n)\rangle$. Thus  $\langle y^*_{n}\circ \delta_{u_n}, h+f\rangle\to \|h+f\|_{\infty}=\sup_{x\in K, y^*\in S_{Y}^*}\langle y^* \circ \delta_{x}, h+f\rangle$, which implies that  $d(u_n,x_f):=\gamma_\mathcal{P}(y^*_n \circ \delta_{u_n}, y^*_f \circ \delta_{x_f})\to 0$, since $\hat{h}+\hat{f}$ attains $\gamma_\mathcal{P}$-strongly-directionally its supremum over $C_K$ at  $y_f^* \circ \delta_{x_f}$. By the continuity of $\|(h+f)(\cdot)\|$, we have that $\|h+f\|_{\infty}=\|(h+f)(x_f)\|$. Hence, $\|(h+f)(\cdot)\|$ attains strongly its maximum on $K$ at $x_f$.

 The second part of the theorem, follows from Theorem ~\ref{corprinc}. Indeed,  let $\varepsilon>0$, $\lambda(\varepsilon)=\frac{\varepsilon}{4} \varpi_{C_K}(\varepsilon/4)>0$, (where $\varpi_{C_K}$ is the modulus of uniform $w^*\mathcal{USP}$ of $(C_K,\gamma_\mathcal{P})$ in $Z^*$).  Let, $f\in Z$, $\|f\|_{\infty}=1$ and $x\in K$ such that 
\begin{eqnarray*}
\|f(x)\|>1-\frac{\varepsilon}{4} \varpi_{C_K}(\varepsilon/4)&=&\|f\|_{\infty}-\frac{\varepsilon}{4} \varpi_{C_K}(\varepsilon/4). 
\end{eqnarray*}
We have that $1=\|f\|_{\infty}=\sup_{y^*\circ \delta_z\in C_K}\langle y^*\circ \delta_z, f \rangle$. Moreover, there exists by the Hanh-Banach theorem an $y^*_x\in S_{Y^*}$ such that $$\langle y^*_x\circ \delta_x, f\rangle := \langle y^*_x, f(x)\rangle= \|f(x)\|.$$ Thus, the above inequality can be writen as follows:
\begin{eqnarray*}
\langle y^*_x\circ \delta_x, f\rangle > \sup_{y^*\circ \delta_z\in C_K}\langle y^*\circ \delta_z, f \rangle-\frac{\varepsilon}{4} \varpi_{C_K}(\varepsilon/4). 
\end{eqnarray*}
 We apply Theorem ~\ref{corprinc}, with the function $\hat{f}$ (changing the "infinimum by the "supremum") with the set $C_K$ to obtain some $g\in Z$ and a point $\overline{y}^* \circ \delta_{\overline{x}}\in C_K$ such that

 $(a)$ $\gamma_\mathcal{P}(\overline{y}^* \circ \delta_{\overline{x}}, y^*_x\circ \delta_x):=d(\overline{x},x)<\frac{\varepsilon}{4},$ 

$(b)$ $\|g\|_{\infty}\leq \|g\|_{Z}<\frac{\varepsilon}{2}$, 

$(c)$ $\hat{f}-\hat{g}$ attains $\gamma_\mathcal{P}$-strongly-directionally its supremum over $C_K$ at the point $\overline{y}^* \circ \delta_{\overline{x}}$. This leads, as we have shown above, that $\|(f-g)(\cdot)\|$ attains strongly its maximum at $\overline{x}$.  Equivalently, the function $k:=\frac{f-g}{\|f-g\|_{\infty}}$ is such that $\|k(\cdot)\|$ attains strongly its maximum on $K$ at $\overline{x}$,  $\|k\|_{\infty}=1$  and we have (using triangular inequality), 
\begin{eqnarray*}
\|f-k\|_{\infty}=\|f- \frac{f-g}{\|f-g\|_{\infty}}\|_{\infty} &=& \|g+ (f-g-\frac{f-g}{\|f-g\|_{\infty}})\|_{\infty}\\
                          &\leq& 2 \|g\|_{\infty}\\
                          &<& \varepsilon. 
\end{eqnarray*}
This concludes the proof.
\end{proof}

As a direct consequence of Theorem \ref{NA} and Lemma \ref{property1}, we obtain the following result on norm attaining linear operators, which generalizes some old results, passing from the density to the complement of a $\sigma$-porous set and from norm attained to strongly norm attained.

\begin{corollary} \label{Ethm} Let $X$ be a Banach space having property $(\alpha)$.  Then,  for every  Banach space $Y$, every  $S\in B(X,Y)$ and  every closed subspace $R(X,Y)$ of $B(X,Y)$ contaning $F(X,Y)$, we have that the set 
\begin{eqnarray*}
\mathcal{N}(S):=\lbrace T\in R(X,Y):  S+T \textnormal{ does not attains strongly its norm}\rbrace,
\end{eqnarray*}
is a $\sigma$-porous subset of $R(X,Y)$. In particular (with $S=0$), we have that $NAR(X,Y)$  is the complement of a $\sigma$-porous subset of $R(X,Y)$.
\end{corollary}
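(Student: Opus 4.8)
The plan is to recognize $R(X,Y)$ as one of the operator spaces $Z$ covered by Theorem~\ref{NA}, and then to translate its conclusion into the language of norm attainment. Fix families $\{x_\lambda\}_{\lambda\in\Lambda}\subset X$ and $\{x^*_\lambda\}_{\lambda\in\Lambda}\subset X^*$ witnessing property $(\alpha)$ with constant $\rho\in(0,1)$, and set $K:=\overline{\{x_\lambda:\lambda\in\Lambda\}}^{\|\cdot\|}$. From $|\langle x^*_\lambda,x_\mu\rangle|\leq\rho$ for $\lambda\neq\mu$ we get $\langle x^*_\lambda,x_\lambda-x_\mu\rangle\geq 1-\rho$, hence $\|x_\lambda-x_\mu\|\geq 1-\rho$; thus $K$ is a uniformly discrete, complete subset of $S_X$, so in fact $K=\{x_\lambda:\lambda\in\Lambda\}$ with injective indexing. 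View $Z:=R(X,Y)$, equipped with the operator norm, as a Banach space sitting inside $(C_b(K,Y),\|\cdot\|_\infty)$ via restriction to $K$ (this map is injective because $\mathrm{aco}\{x_\lambda\}$ is dense in $B_X$). By Example~\ref{NA-lin}, $Z$ satisfies hypotheses $(a)$ and $(b)$ of Lemma~\ref{property1}: for $x=x_\lambda\in K$ one takes the scalar bump $b_{x,\varepsilon}:=x^*_\lambda\in S_{X^*}$ (the peaking inequality~(\ref{bump}) holding with the constant $\varpi_K(\varepsilon):=1-\rho$ for every $\varepsilon$), and for each $e\in S_Y$ the function $b_{x,\varepsilon}\cdot e\colon x'\mapsto\langle x^*_\lambda,x'\rangle e$ is a rank-one operator lying in $F(X,Y)\subset R(X,Y)$ of operator norm $1$.

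Lemma~\ref{property1} then tells us that $Z$ satisfies the identity~(\ref{I}) and that $(C_K,\gamma_\mathcal{P})$ is a complete pseudometric space having the $w^*\mathcal{USP}$ in $Z^*$; together with $\|\cdot\|_Z\geq\|\cdot\|_\infty$ this is exactly what Theorem~\ref{NA} needs. Applying that theorem with $h:=S\in B(X,Y)\subset C_b(K,Y)$, we obtain that
\[
\{T\in R(X,Y):\ \|(S+T)(\cdot)\|\ \text{does not attain strongly its maximum over }K\}
\]
is $\sigma$-porous in $(Z,\|\cdot\|_Z)$. It remains to rewrite this. Since $\mathrm{aco}\{x_\lambda\}$ is dense in $B_X$ and $\|A(\sum_i a_ix_{\lambda_i})\|\leq(\sum_i|a_i|)\sup_\lambda\|Ax_\lambda\|\leq\sup_\lambda\|Ax_\lambda\|$ whenever $\sum_i|a_i|\leq 1$, one has $\|A\|_{B(X,Y)}=\sup_{x\in K}\|Ax\|$ for every $A\in B(X,Y)$; in particular $\|\cdot\|_Z=\|\cdot\|_\infty$ on $Z$, and the displayed set is exactly $\mathcal{N}(S)$ once one checks that ``$\|(S+T)(\cdot)\|$ attains strongly its maximum over $K$'' is the statement that $S+T$ attains its norm strongly --- the maximizer being one of the $x_\lambda$, and being unique by the strong-maximum property together with the uniform discreteness of $K$. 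This proves that $\mathcal{N}(S)$ is $\sigma$-porous in $R(X,Y)$; taking $S=0$ gives $NAR(X,Y)=R(X,Y)\setminus\mathcal{N}(0)$, the complement of a $\sigma$-porous set.

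The proof is short because the substantive work is already done upstream: Theorem~\ref{princ} produces the $\sigma$-porosity out of the $w^*\mathcal{USP}$, Lemma~\ref{property1} encapsulates the operator-space setting, and Example~\ref{NA-lin} verifies its hypotheses from property $(\alpha)$. The one genuinely delicate point --- and the only place where the constant $\rho<1$ is really needed beyond Example~\ref{NA-lin} --- is the passage from ``strongly attained over the uniformly discrete grid $K$'' to the ordinary notion of a strongly norm attaining operator: one has to prevent the near-maximal mass of a nearly optimal unit vector from splitting between two distinct $x_\lambda,x_\mu$ both of whose images are of nearly maximal norm, and this is precisely excluded by the separation $\|x_\lambda-x_\mu\|\geq 1-\rho$ established at the outset. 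I expect this reconciliation to be the main obstacle in making the argument fully rigorous.
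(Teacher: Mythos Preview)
Your argument is essentially the paper's own: restrict to $K=\overline{\{x_\lambda\}}$, verify Lemma~\ref{property1} from property~$(\alpha)$ exactly as in Example~\ref{NA-lin}, note that $\|T\|=\sup_{x\in K}\|Tx\|$, and apply Theorem~\ref{NA} with $h=S$; the paper is in fact terser and simply writes ``the conclusion follows from Theorem~\ref{NA}'' without discussing the translation you flag at the end.

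On that flag: the specific scenario you describe --- mass of a near-optimal unit vector splitting between two distinct $x_\lambda,x_\mu$ whose images are both near-maximal --- is already excluded, not by the inequality $\|x_\lambda-x_\mu\|\geq 1-\rho$ alone, but by the \emph{strong} maximum over $K$ that Theorem~\ref{NA} provides: combined with the discreteness of $K$, for every sufficiently small tolerance there is a \emph{unique} $x_\lambda\in K$ whose image norm exceeds $\|S+T\|$ minus that tolerance, so any absolutely convex approximation of a near-optimal $x\in S_X$ must put almost all its mass on $\pm x_0$. What genuinely remains is the antipodal symmetry $\|A(-x_0)\|=\|Ax_0\|$, which shows that the paper's literal introductory definition of ``norm strongly attaining'' (with $(x_n)$ ranging over all of $S_X$) is vacuous for every nonzero operator; the paper is tacitly using the phrase to mean precisely what Theorem~\ref{NA} delivers, namely that $x\mapsto\|(S+T)x\|$ has a strong maximum on $K$, and with that reading your proof is complete.
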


\begin{proof}  Let $\lbrace x_\lambda: \lambda \in \Lambda \rbrace$, $\lbrace x^*_\lambda: \lambda \in \Lambda \rbrace$, subsets of $X$ and $X^*$, satisfying property $(\alpha)$. Let us set $K:=\overline{\lbrace x_\lambda: \lambda \in \Lambda \rbrace}^{\|\cdot\|_X}$. It is easy to see, thanks to property $(\alpha)$ (see Example B in section \ref{S1}), that for every $\varepsilon>0$ there exists $\varpi_K(\varepsilon)>0$ such that,  for every $x\in K$, there exists $b_{x,\varepsilon}\in \lbrace x^*_\lambda: \lambda \in \Lambda \rbrace\subset S_{X^*}$ such that 
\begin{eqnarray*} 
\langle b_{x,\varepsilon},x\rangle -\varpi_K(\varepsilon)\geq \sup_{x'\in K:\|x'-x\|\geq \varepsilon} |\langle b_{x,\varepsilon}, x'\rangle|.
\end{eqnarray*}
On the other hand, since the absolute convex hull of the set $\lbrace x_\lambda: \lambda \in \Lambda \rbrace$ is dense in the unit ball of $X$, we have that for every $T\in B(X,Y)$, $$\|T\|=\sup_{x\in K}\|T(x)\|.$$ 
Considering $Z:=R(X,Y)$ as a closed subspace of $(C_b(K,Y),\|\cdot\|_{\infty})$, it is clear that $R(X,Y)$ satisfies parts $(a)$ and $(b)$ of Lemma \ref{property1}. Thus, the conclusion follows from  Theorem \ref{NA}.
\end{proof}
\begin{corollary} \label{Ecor}  Let $X$ be a Banach space having property $(\alpha)$.  Let $(T_n)\subset B(X,Y)$ be a sequence of bounded linear operators. Then, for every $\varepsilon>0$, there exists a compact operator $T$ which is norm-limit of a sequence of finite-rank operators, such that $\|T\|<\varepsilon$ and $T_n+T$ attains strongly its norm for every $n\in \N$.
\end{corollary}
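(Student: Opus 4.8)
The plan is to deduce this from Corollary \ref{Ethm} by a Baire-category argument, taking as operator space $R(X,Y)$ the norm-closure $\overline{F(X,Y)}^{\|\cdot\|}$ of the finite-rank operators inside $B(X,Y)$. This $R(X,Y)$ is a closed subspace of $B(X,Y)$ containing $F(X,Y)$, so Corollary \ref{Ethm} applies; moreover, by its very definition every element of $R(X,Y)$ is the norm-limit of a sequence of finite-rank operators, hence compact — which is precisely the type of perturbation the statement requires.

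First I would apply Corollary \ref{Ethm}, for each $n \in \N$, with $S = T_n$: this gives that
$$\mathcal{N}(T_n) = \{ T \in R(X,Y) : T_n + T \textnormal{ does not attain strongly its norm} \}$$
is a $\sigma$-porous subset of $R(X,Y)$. Next I would observe that a countable union of $\sigma$-porous sets is again $\sigma$-porous (each is a countable union of porous sets), so $\bigcup_{n\in\N}\mathcal{N}(T_n)$ is $\sigma$-porous in $R(X,Y)$, and in particular of the first Baire category there.

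Then I would use that $R(X,Y)$, being a closed subspace of the Banach space $B(X,Y)$, is itself a Banach space, hence a Baire space; therefore the nonempty open ball $\mathring{B}_{R(X,Y)}(0,\varepsilon)$ is not of the first category and cannot be covered by $\bigcup_{n\in\N}\mathcal{N}(T_n)$. Choosing $T\in\mathring{B}_{R(X,Y)}(0,\varepsilon)\setminus\bigcup_{n\in\N}\mathcal{N}(T_n)$ finishes the argument: $\|T\|<\varepsilon$; since $T\in R(X,Y)=\overline{F(X,Y)}^{\|\cdot\|}$, $T$ is the norm-limit of a sequence of finite-rank operators and hence compact; and $T\notin\mathcal{N}(T_n)$ for every $n$ means exactly that $T_n+T$ attains strongly its norm for every $n\in\N$.

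I do not expect a genuine obstacle here: all the analytic difficulty is already packaged into Corollary \ref{Ethm} (hence into the variational principle Theorem \ref{princ}), and what remains is the soft fact that a countable union of $\sigma$-porous sets stays $\sigma$-porous and so is too meagre to fill an open ball of a complete metric space. The one point I would state explicitly is that the choice $R(X,Y)=\overline{F(X,Y)}^{\|\cdot\|}$ is what simultaneously makes Corollary \ref{Ethm} available and forces the perturbation $T$ to be compact and approximable by finite-rank operators.
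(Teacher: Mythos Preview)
Your proposal is correct and follows essentially the same route as the paper: choose $R(X,Y)=\overline{F(X,Y)}$, apply Corollary~\ref{Ethm} with $S=T_n$ to get each $\mathcal{N}(T_n)$ $\sigma$-porous, note that the countable union $\bigcup_n\mathcal{N}(T_n)$ is still $\sigma$-porous, and pick $T$ of norm less than $\varepsilon$ in its complement. The only cosmetic difference is that the paper phrases the last step as ``the complement of a $\sigma$-porous set is dense'' rather than your explicit Baire-category formulation, but the content is identical.
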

\begin{proof}  We apply Theorem \ref{Ethm} with $R(X,Y)=\overline{F(X,Y)}$ and $S=T_n$ for each $n\in \N$, we get  $\sigma$-porous sets $\mathcal{N}(T_n)$ such that every $T\in \overline{F(X,Y)}\setminus \mathcal{N}(T_n)$, $T_n +T$ attains strongly its norm. The set $\cup_n \mathcal{N}(T_n)$ is also a  $\sigma$-porous set. Thus, in particular, $\overline{F(X,Y)}\setminus \cup_n \mathcal{N}(T_n)$ is dense in $\overline{F(X,Y)}$. Hence, for every $\varepsilon >0$, there exists $T\in  \overline{F(X,Y)}\setminus \cup_n \mathcal{N}(T_n)$ such that $\|T\|<\varepsilon$ and $T_n +T$ attains strongly its norm for all $n\in \N$.
\end{proof}
Since $(C_b^u(K,Y), \|\cdot\|_{\infty})$  satisfies $(a)$ and $(b)$ of Lemma \ref{property1}, with $b_{x,\varepsilon}: z \mapsto \max (0, 1-\frac{d(z,x)}{\varepsilon})$, using Theorem \ref{NA} we immediately obtain the following result.
\begin{corollary} \label{Ecor1} Let $(K,d)$ be a complete metric space and $Y$ be a Banach space. Then, the subset of $C_b(K,Y)$ (resp. of $C_b^u(K,Y)$) of all bounded continuous (resp. uniformly continuous) operators attaining strongly their sup-norm, is a complement of a $\sigma$-porous subset of the Banach space $(C_b(K,Y), \|\cdot\|_{\infty})$ (resp. of $(C_b^u(K,Y), \|\cdot\|_{\infty})$).  
\end{corollary}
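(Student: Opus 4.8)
The plan is to verify that the hypotheses of Theorem~\ref{NA} are met for each of the two Banach spaces in question, so that the conclusion follows directly. I have essentially done this already in Example~\ref{Cont} and the remark preceding the corollary: the point is that $(C_b(K,Y),\|\cdot\|_\infty)$ and $(C_b^u(K,Y),\|\cdot\|_\infty)$ each satisfy conditions $(a)$ and $(b)$ of Lemma~\ref{property1}. Condition $(a)$, namely $\|\cdot\|_Z\geq\|\cdot\|_\infty$, is trivial since here $\|\cdot\|_Z=\|\cdot\|_\infty$. For condition $(b)$, fix $\varepsilon>0$ and take $b_{x,\varepsilon}:z\mapsto\max(0,1-d(z,x)/\varepsilon)$ for each $x\in K$; this is a bounded real-valued function on $K$ which is $\tfrac1\varepsilon$-Lipschitz, hence in particular uniformly continuous, with $\|b_{x,\varepsilon}\|_\infty=1$, $b_{x,\varepsilon}(x)=1$, and $b_{x,\varepsilon}(x')=0$ whenever $d(x',x)\geq\varepsilon$. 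So for every $e\in S_Y$ the function $b_{x,\varepsilon}\cdot e$ lies in $C_b^u(K,Y)\subset C_b(K,Y)$, with norm $\|b_{x,\varepsilon}\cdot e\|_\infty=1\leq 1$, and the separation inequality~(\ref{bump}) holds with $\varpi_K(\varepsilon)=1$ since $b_{x,\varepsilon}(x)-1=0\geq\sup_{d(x',x)\geq\varepsilon}|b_{x,\varepsilon}(x')|$. (One may equally take any $\varpi_K(\varepsilon)\in(0,1)$ and a slightly shrunken bump if a strict margin is desired, but it is not needed.)

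Having checked $(a)$ and $(b)$, Lemma~\ref{property1} yields that $Z$ satisfies the identity~(\ref{I}) and that $(C_K,\gamma_\mathcal{P})$ is a complete pseudometric space with the $w^*\mathcal{USP}$ in $Z^*$ — this is exactly the standing hypothesis of Theorem~\ref{NA}. Applying Theorem~\ref{NA} with $h=0$ then gives that the set of $g\in Z$ for which $g$ does not attain strongly its sup-norm is $\sigma$-porous in $(Z,\|\cdot\|_\infty)$; equivalently, the set of operators that do attain strongly their sup-norm is the complement of a $\sigma$-porous subset. Running this once with $Z=C_b(K,Y)$ and once with $Z=C_b^u(K,Y)$ produces the two assertions of the corollary.

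There is essentially no obstacle here: the corollary is a plug-in of already-established machinery, and the only thing to be careful about is that $b_{x,\varepsilon}\cdot e$ genuinely lies in the relevant space with the right norm bound — which is immediate once one notes that $b_{x,\varepsilon}$ is Lipschitz (hence uniformly continuous, hence bounded continuous) and $\|b_{x,\varepsilon}\|_\infty=1$. Thus the whole argument is: \emph{verify (a) and (b) of Lemma~\ref{property1} with the canonical tent function $b_{x,\varepsilon}$, invoke Lemma~\ref{property1}, then invoke Theorem~\ref{NA} with $h=0$, separately for $C_b(K,Y)$ and $C_b^u(K,Y)$.}
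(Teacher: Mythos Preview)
Your proof is correct and follows exactly the route the paper takes: verify conditions $(a)$ and $(b)$ of Lemma~\ref{property1} using the tent function $b_{x,\varepsilon}(z)=\max(0,1-d(z,x)/\varepsilon)$, then invoke Theorem~\ref{NA} with $h=0$. The paper's own proof is a one-line reference to Example~\ref{Cont} and Theorem~\ref{NA}; you have simply spelled out the verification in more detail, with nothing substantively different.
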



\bibliographystyle{amsplain}

\end{document}